\numberwithin{equation}{section}
\newcommand{\mathp}{\mathcal{P}}
\newcommand{\mo}{\mathcal{O}}
\newcommand{\e}{\epsilon}
\newcommand{\R}{\text{Re }}
\newcommand{\I}{\text{Im }}
\newtheorem{theorem}{Theorem}
\newtheorem*{remark}{Remark}
\newtheorem{lemma}{Lemma}
\begin{document}

\title[Asymptotics of the SVD of the truncated Hilbert transform]{Asymptotic analysis of the SVD for the truncated Hilbert transform with overlap}
\author[R Al-Aifari, M Defrise, A Katsevich]{Reema Al-Aifari$^1$ \and Michel Defrise $^2$ \and Alexander Katsevich$^3$}
\thanks{$^1$ Department of Mathematics, Vrije Universiteit Brussel, Brussels B-1050, Belgium}
\thanks{$^2$ Department of Nuclear Medicine, Vrije Universiteit Brussel, Brussels B-1050, Belgium}
\thanks{$^3$ Department of Mathematics, University of Central Florida, FL 32816, USA}

\begin{abstract}
The truncated Hilbert transform with overlap $H_T$ is an operator that arises in tomographic reconstruction from limited data, more precisely in the method of Differentiated Back-Projection (DBP). Recent work \cite{a-k} has shown that the singular values of this operator accumulate at both zero and one. 
To better understand the properties of the operator and, in particular, the ill-posedness of the inverse problem associated with it, it is of interest to know the rates at which
the singular values approach zero and one. In this paper, we exploit the property that $H_T$ commutes with a second-order differential operator $L_S$ and the global asymptotic behavior of its eigenfunctions to find the asymptotics of the singular values and singular functions of $H_T$.

\end{abstract}
\maketitle

\section{Introduction}\label{sec:intro}
In 2D or 3D Computerized Tomography (CT), an image of an object is reconstructed from measurements that can be modeled as Radon transform or cone beam transform data, respectively. Typically, a source emitting a beam of X-rays rotates around the object and a detector measures the attenuation of the X-ray beam after it traverses the object. When measurements from a sufficiently dense set of rays crossing the object are collected, standard techniques (e.g. Filtered Back-Projection) allow for stable reconstruction \cite{nat}.

In the case of limited data, e.g. when only measurements from an angular range less than $180$ degrees are available or when only a strict subset of the object support is illuminated from all directions, reconstruction becomes more difficult. While these cases can occur in practice (for example, with an oversize patient), reconstruction from limited data may also allow to reduce the radiation dose to which patients are exposed. 

The Differentiated Back-Projection (or DBP), a method based on a result by Gelfand and Graev \cite{gg}, allows to identify a class of limited data configurations, such that reconstruction is still possible. It is based on the reduction of the 2D or 3D problem to a family of 1D problems. These consist of the reconstruction of a compactly supported function in 1D from its partially known Hilbert transform. The application of the Gelfand-Graev formula to tomography was first introduced by Finch \cite{finch} and later made explicit for 2D \cite{noo, yyww, zps} and for 3D \cite{pnc, yzyw, zhuang, zp}.

In 2D, the Differentiated Back-Projection reduces the reconstruction problem to a family of 1D problems that can be formulated as inverting operators of the form $\mathp_{\Omega_1} H \mathp_{\Omega_2}$, where $H$ is the Hilbert transform on $L^2(\mathbb{R})$, $\Omega_1$, $\Omega_2$ are finite intervals on $\mathbb{R}$ and $\mathp_{\Omega}$ is the projection operator $(\mathp_{\Omega} f) (x) = f(x)$ if $x \in \Omega$, $(\mathp_{\Omega} f)(x) = 0$ otherwise. If $\Omega_2 \subset \Omega_1$, i.e. the Hilbert transform is measured on an interval covering the support of the object, the inversion of $\mathp_{\Omega_1} H \mathp_{\Omega_2}$ is well-posed and an explicit inversion formula is known \cite{tricomi}.

In general, when $\Omega_2 \not\subset \Omega_1$ the inversion of $\mathp_{\Omega_1} H \mathp_{\Omega_2}$ has turned out to be \textit{severely} ill-posed. Thus, it is of interest to study the singular value decomposition (SVD) of such operators. The SVD in the case of $\Omega_1 \subset \Omega_2$, which occurs in the so-called \textit{interior problem}, has been studied in \cite{kat2}. The SVD of the truncated Hilbert transform \textit{with a gap}, which describes the case $\Omega_1 \cap \Omega_2 = \emptyset$ has been the subject of \cite{kat1}. For both cases, the asymptotic behavior of the singular values and singular functions has been found in \cite{k-t}.  

This paper concerns a different setup, the truncated Hilbert transform \textit{with overlap} $H_T$. This is the case when the two intervals overlap, i.e. $\Omega_1=[a_1,a_3]$, $\Omega_2=[a_2,a_4]$ for real numbers $a_1 < a_2 < a_3 < a_4$. For this case, a uniqueness and \textit{pointwise} stability result for the inversion was obtained in \cite{defrise}. The SVD of the truncated Hilbert transform \textit{with overlap} has been characterized in \cite{a-k}, where it is shown that the singular values of $H_T$ accumulate at both $0$ and $1$, where the accumulation point $0$ causes the ill-posedness of inverting the operator $H_T$. Motivated by this result, this paper studies the asymptotic behavior of the singular values and singular functions of $H_T$. One of the main results we present here is an explicit expression for the exponential decay of the singular values of $H_T$ to zero, yielding the \textit{severe} ill-posedness of the underlying problem. 

The paper is organized as follows: Section \ref{prelim} starts with an overview of the results obtained in \cite{a-k}, that will be used in the sequel. In Section \ref{twoacc} we show an intermediate result on the eigenvalues of a differential operator that is related to the operator $H_T$ in a sense to be defined in Section \ref{prelim}. Next, Section \ref{sec:procedure} gives an outline and description of the approach used to find the asymptotic behavior of the SVD. In Section \ref{sec:asymp-0}, the asymptotic behavior of the SVD is derived for the subsequence of singular values accumulating at zero. We use this result together with a symmetry property in Section \ref{sigma-1} to obtain the asymptotics for the case where the singular values tend to $1$. We conclude by comparing the theoretical results obtained from the asymptotic analysis with a numerical example in Section \ref{sec:numerics}.

\section{Preliminaries}\label{prelim}
In \cite{a-k} we have analyzed the spectrum of the operator $H_T^* H_T$, where $H_T: L^2([a_2,a_4]) \to L^2([a_1,a_3])$ is the truncated Hilbert transform \textit{with overlap} defined for any fixed four real numbers $a_1 < a_2 < a_3 < a_4$ to be the following operator
\begin{equation}\label{tht}
(H_T f)(x) := \frac{1}{\pi} \text{p.v. } \int_{a_2}^{a_4} \frac{f(y)}{y-x} dy, \quad x \in (a_1,a_3)
\end{equation}
where $\text{p.v.}$ stands for the principal value. 

By relating $H_T$ to a self-adjoint extension of a differential operator with which it commutes, we found that the singular values of $H_T$ accumulate (only) at $0$ and $1$, where $0$ and $1$ themselves are not singular values. A natural question that then arises is the asymptotic behavior of the singular values, i.e. the convergence rates of the accumulation at $0$ and $1$. Especially in view of the ill-posedness of the inversion of $H_T$, it is important to ask how fast the singular values decay to zero. 

To answer this question, we will need to consider the singular value decomposition $\{ f_n, g_n; \sigma_n\}$, $n \in \mathbb{Z}$,  of $H_T$: 
\begin{align}
H_T f_n &= \sigma_n g_n, \label{svd1} \\
H_T^* g_n &= \sigma_n f_n, \label{svd2}
\end{align}
and study the asymptotic behavior of the singular functions $f_n$ and $g_n$ to find the asymptotics of $\sigma_n$. For the indices of the singular values we choose the convention $n \to + \infty$ for $\sigma_n \to 0$ and $n \to - \infty$ for $\sigma_n \to 1$.

In what follows, we briefly summarize results found in \cite{a-k}, to which we refer for details and proofs. By the commutation property, $\{ f_n \}_{n \in \mathbb{Z}}$ are the eigenfunctions of the differential operator $L_S$ that we define by first introducing
\begin{equation}
L(x,d_x) \psi(x):= (P(x)\psi'(x))'+2 (x-\sigma)^2 \psi(x)
\end{equation}
where
\begin{equation}
P(x) = \prod_{j=1}^4 (x-a_j), \quad  \sigma = \frac{1}{4}\sum_{j=1}^4 a_j.
\end{equation}
Let $D_{\max}$ denote the maximal domain on $(a_2,a_3) \cup (a_3,a_4)$ associated with $L(x,d_x)$ given by
\begin{align}
D_{\max} := \{ &\psi: (a_2,a_3) \cup (a_3,a_4) \rightarrow \mathbb{C}: \psi_{2,3}, P \psi_{2,3}' \in AC_{loc}(a_2,a_3), \label{dmax}\\
& \psi_{3,4}, P \psi_{3,4}' \in AC_{loc}(a_3,a_4); \psi, L \psi \in L^2([a_2,a_4]) \}, \nonumber
\end{align} 
where $\psi_{2,3}$, $\psi_{3,4}$ denote the restrictions of $\psi$ to $(a_2,a_3)$ and $(a_3,a_4)$, respectively, and $AC_{loc}(I)$ stands for the space of locally absolutely continuous functions on $I$.
Furthermore, we introduce the notation $a_j^{\pm} = \lim\limits_{\e \to 0^{\pm}} a_j+\e$ and the Lagrange sesquilinear form of two functions $u$, $v$:
$$ [u,v] := uP\overline{v}'-\overline{v}Pu'.$$
Then, the realization $L_S: D(L_S) \rightarrow L^2([a_2,a_4])$ of $L(x,d_x)$ on the domain 
\begin{align}
D(L_S) := \{ \psi \in D_{\max}: [\psi,u](a_2^+) &= [\psi,u](a_4^-) = 0, \label{ds}  \\
[\psi,u](a_3^-) &= [\psi,u](a_3^+), [\psi,v](a_3^-) = [\psi,v](a_3^+) \} \nonumber
\end{align}
with the following choice of maximal domain functions $u, v \in D_{\max}$
\begin{align}
u(y) &:= 1, \label{u} \\
v(y) &:= \sum_{i=1}^4 \prod_{\substack{j \neq i \\ j \in \{1, \dots, 4 \}}} \frac{1}{a_i-a_j} \ln|y-a_i|, \label{v}
\end{align} 
is self-adjoint. The spectrum of $L_S$ is real and discrete and the left singular functions $f_n$, $n \in \mathbb{Z}$, of $H_T$ are the eigenfunctions of $L_S$:
\begin{equation}\label{eigen-ls}
L_S f_n = \lambda_n f_n
\end{equation}
and form an orthonormal basis of $L^2([a_2,a_4])$. For the differential operator $\tilde{L}_S: D(\tilde{L}_S) \subset L^2([a_1,a_3]) \rightarrow  L^2([a_1,a_3])$, defined in the same way as $L_S$, but with $a_2, a_3, a_4$ replaced by $a_1,a_2,a_3$ in the definitions \eqref{dmax} and \eqref{ds}, we also obtain

\begin{equation}\label{eigen-tilde-ls}
\tilde{L}_S g_n = \lambda_n g_n.
\end{equation}
Here, $g_n$ are the right singular functions of $H_T$ from above. The eigenvalues $\lambda_n$ in \eqref{eigen-ls} and \eqref{eigen-tilde-ls} coincide.

From the theory of Fuchs-Frobenius, it follows that the points $a_i$ are \textit{regular singular} and that the two linearly independent solutions to $(L-\lambda) \psi = 0$ in a neighborhood of $a_i^+$ or $a_i^-$ are given by
\begin{align}
\psi_1(x) &= \sum_{n=0}^{\infty} b_n (x-a_i)^n \label{dof1} \\
\psi_2(x) &=  \sum_{n=0}^{\infty} d_n (x-a_i)^n + \ln |x-a_i| \psi_1(x) \label{dof2}
\end{align}
where the coefficients $d_n$ are different to the left and to the right of $a_i$.
This allows to simplify the characterization of the eigenfunctions $f_n$, $n \in \mathbb{Z}$:

A function $f \in L^2([a_2,a_4])$ is an eigenfunction of $L_S$ if and only if, 
\begin{itemize}
\item[--] it solves $L f = \lambda f$ for some $\lambda \in \mathbb{C}$, \\
\item[--] it is bounded at $a_2^+$ and at $a_4^-$, \\
\item[--] it is of the form $\phi_{11}(x) + \ln |x-a_3| \cdot \phi_{12}(x)$ at $a_3^-$ and \\
\item[--] of the form $\phi_{21}(x) + \ln |x-a_3| \cdot \phi_{22}(x)$ at $a_3^+$ and\\
\item[--] with analytic functions $\phi_{ij}$ such that $\phi_{11}(x)$ matches $\phi_{21}(x)$ continuously at $a_3$ and $\phi_{12}(x)$ matches $\phi_{22}(x)$ continuously at $a_3$, i.e.
\begin{align}
\lim_{x \to a_3^-} \phi_{11}(x) &= \lim_{x \to a_3^+} \phi_{21}(x) \label{tc1} \\
\lim_{x \to a_3^-} \phi_{12}(x) &= \lim_{x \to a_3^+} \phi_{22}(x) \label{tc2}
\end{align}
\end{itemize}
We refer to \eqref{tc1}, \eqref{tc2} as transmission conditions at the point $a_3$. 

At $a_i$ an eigenfunction $g$ of $\tilde{L}_S$ satisfies the same conditions that an eigenfunction of $L_S$ has at $a_{i+1}$, $i=1,2,3$.

\section{The spectrum of $L_S$ has two accumulation points}\label{twoacc}

In \cite{a-k}, we have shown that the operator $(L_S-i)^{-1}$ is compact. Hence, the spectrum of $L_S$ is discrete and the only possible accumulation points are $\lambda_n \to \pm \infty$, $n \in \mathbb{Z}$. As we will see in the following sections, deriving the asymptotics of the singular values $\sigma_n$ of $H_T$ for just one of the two possible accumulation points for $\lambda_n$ results in only one accumulation point of $\sigma_n$. More precisely, $\lambda_n \to + \infty$ leads to $\sigma_n \to 0$ and $\lambda_n \to -\infty$ to $\sigma_n \to 1$. Since we have shown in \cite{a-k} that both $0$ and $1$ are accumulation points of the spectrum of $H_T^* H_T$, this suggests that the eigenvalues $\lambda_n$ of $L_S$ accumulate at both $+\infty$ and $- \infty$. 

For self-adjoint realizations of $L(x,d_x)$ on an interval where the function $P(x)$ is negative, the spectrum of this self-adjoint realization is bounded below. Since in the case of $L_S$, we consider $P(x)$ on $(a_2,a_4)$, i.e. on an interval on which $P$ changes sign, it seems intuitive to assume that the spectrum of $L_S$ is unbounded from below and from above. 

Indeed, in the case where $P(x)$ changes sign and $1/P(x)$ is locally integrable on $(a_2,a_4)$, standard results in Sturm-Liouville theory state that the spectrum of the resulting differential operator is unbounded from below and from above, \cite{kong}. However, local integrability of $1/P(x)$ is not the case for $L_S$. In order to show the unboundedness from below and from above of the spectrum of $L_S$, we construct two sequences of functions $u_n \in D(L_S)$, $n \in \mathbb{N}$, supported on $[a_2,a_3]$ and $v_n \in D(L_S)$, $n \in \mathbb{N}$, supported on $[a_3,a_4]$ for which 
\begin{align}
\langle L_S u_n, u_n \rangle / \langle u_n, u_n \rangle & \to -\infty, \label{un} \\
\langle L_S v_n, v_n \rangle / \langle v_n, v_n \rangle & \to +\infty, \label{vn}
\end{align}
as $n \to \infty$. For $I \subset \mathbb{R}$, let $\chi_I$ denote the characteristic function on $I$ and define $w_1(x) = \chi_{[a_2,a_3]}(x)(x-a_2) (a_3-x)$ and $w_2(x) = \chi_{[a_3,a_4]}(x-a_3) (a_4-x)$. Then, we choose the functions $u_n$ and $v_n$ to be
\begin{align*}
u_n(x) &:= w_1(x) \cos(nx), \\
v_n(x) &:= w_2(x) \cos(nx).
\end{align*}
From $(P(x) u_n'(x))' = - P(x) w_1(x) n^2 \cos(nx) + \mo(n)$, we obtain
\begin{align}
\langle L_S u_n, u_n \rangle &= -  n^2 \int_{a_2}^{a_3} P(x) w_1^2(x) \cos^2(n x) dx + \mo(n), \label{Lu} \\
 & \leq - n^2 (a_2-a_1) (a_4-a_3) \int_{a_2}^{a_3} w_1^3(x) \cos^2(n x) dx + \mo(n) \nonumber
\end{align}
A direct computation yields
\begin{equation*}
\int_{a_2}^{a_3} ((x-a_2)(a_3-x))^3 \cos^2(nx) dx = \frac{(a_3-a_2)^7}{280} + \mo(n^{-4})
\end{equation*}
so that the integral on the right-hand side in \eqref{Lu} is bounded away from zero. Thus, $\langle L_S u_n, u_n \rangle \to -\infty$. Furthermore, from $\|u_n \|_{L^2} \leq \| w_1 \|_{L^2}$, we find that \eqref{un} holds.

Similarly, we get for $v_n$, that $(P(x) v_n'(x))' = - P(x) w_2(x) n^2 \cos(nx) + \mo(n)$ and
\begin{align}
\langle L_S v_n, v_n \rangle &= -  n^2 \int_{a_3}^{a_4} P(x) w_2^2(x) \cos^2(n x) dx + \mo(n) \\
& \geq n^2 (a_3-a_1)(a_3-a_2) \int_{a_3}^{a_4} w_2^3(x) \cos^2(nx) dx + \mo(n). \nonumber
\end{align}
Moreover,
\begin{equation*}
\int_{a_3}^{a_4} ((x-a_3)(a_4-x))^3 \cos^2(nx) dx = \frac{(a_4-a_3)^7}{280} + \mo(n^{-4}).
\end{equation*}
Therefore, $\langle L_S v_n, v_n \rangle \to +\infty$. The inequality $\| v_n \|_{L^2} \leq \| w_2 \|_{L^2}$ then implies \eqref{vn}.
\begin{theorem}
The spectrum of $L_S$ is purely discrete and accumulates at $+\infty$ and $-\infty$, i.e. the operator is unbounded from below and from above. There are no further accumulation points in the spectrum.
\end{theorem}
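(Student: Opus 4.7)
The plan is to combine two ingredients already in place: the compact resolvent of $L_S$ established in \cite{a-k}, and the Rayleigh-quotient estimates for the sequences $\{u_n\}$ and $\{v_n\}$ carried out above. Compactness of $(L_S-i)^{-1}$ forces the spectrum of $L_S$ to be a discrete set of real eigenvalues of finite multiplicity whose only possible accumulation points lie at $\pm\infty$; this at once yields the statements that the spectrum is purely discrete and that no finite accumulation points exist. What remains is to show that \emph{both} $+\infty$ and $-\infty$ are realised as accumulation points.

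For this I would invoke the standard min-max characterization: a self-adjoint operator $A$ is bounded below by $m \in \mathbb{R}$ if and only if $\langle A\phi,\phi\rangle \geq m\|\phi\|^2$ for every $\phi \in D(A)$. Applied contrapositively to $L_S$, the estimate $\langle L_S u_n,u_n\rangle/\|u_n\|^2 \to -\infty$ in \eqref{un} rules out any lower bound, so by discreteness the eigenvalues must accumulate at $-\infty$; the symmetric argument based on \eqref{vn} and $\{v_n\}$ gives accumulation at $+\infty$. The two sequences have to be supported on opposite sides of $a_3$ precisely because $P(x)$ changes sign there, so that the leading contribution $-n^2\int P w_i^2 \cos^2(nx)\,dx$ can be driven to $-\infty$ or to $+\infty$ only by choosing the support on one side or the other.

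The one technical point I expect to require care is verifying that the test functions $u_n, v_n$ actually lie in $D(L_S)$, so that the quadratic form argument applies to the specific self-adjoint realization $L_S$ rather than to some other extension of $L(x,d_x)$. Because $w_1$ vanishes quadratically at $a_2$ and $a_3$ and is supported on $[a_2,a_3]$ (and analogously for $w_2$), the functions $u_n$, $v_n$ together with $Pu_n'$, $Pv_n'$ extend continuously by zero at the outer endpoints $a_2,a_4$ and across $a_3$ from the opposite side. Consequently the Lagrange brackets $[\psi,u]$ and $[\psi,v]$ vanish at $a_2^+$ and $a_4^-$, and their one-sided limits at $a_3^\pm$ trivially agree (both are zero), so all the boundary and transmission conditions in \eqref{ds} are satisfied; that $L\psi \in L^2$ is immediate from the smoothness of the coefficients on the supports. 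Once this bookkeeping is in place, the theorem reduces to the two Rayleigh-quotient estimates already performed in the discussion preceding the statement.
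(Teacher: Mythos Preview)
Your proposal is correct and follows exactly the paper's approach: compactness of $(L_S-i)^{-1}$ from \cite{a-k} gives discreteness with no finite accumulation points, and the Rayleigh-quotient computations \eqref{un}--\eqref{vn} with the explicit test functions $u_n,v_n$ constructed just before the theorem force both $\pm\infty$ to be accumulation points. One minor slip: $w_1$ vanishes only \emph{linearly} at $a_2$ and $a_3$, not quadratically, but linear vanishing together with $P(a_i)=0$ already suffices for the boundary and transmission conditions you check, so the domain-membership argument goes through as written.
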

\begin{remark}
The singular functions $f_n$ and $g_n$ of $H_T$ are the $n$-th eigenfunctions of the operators $L_S$ and $\tilde{L}_S$, respectively. The spectra of $L_S$ and $\tilde{L}_S$ are the same, i.e.
\begin{align*}
L_S f_n &= \lambda_n f_n, \\
\tilde{L}_S g_n &= \lambda_n g_n.
\end{align*}
The above theorem states that the eigenvalues $\lambda_n$ accumulate at both $+\infty$ and $-\infty$. As a consequence (see e.g. \cite{erdelyi}, Section 4.5), when $\lambda_n$ is large and positive, the functions $f_n$ oscillate on the region where $P(x)$ is negative and decay monotonically where $P(x)$ is positive. The same is true for $g_n$. Thus, the $f_n$ are oscillatory on $(a_3,a_4)$, the $g_n$ oscillate on $(a_1,a_2)$ and they are both monotonic on $(a_2,a_3)$. The opposite is true for large negative $\lambda_n$. In this case, $f_n$ and $g_n$ both oscillate on $(a_2,a_3)$ and are monotonic outside of this interval. This corresponds to singular values $\sigma_n$ of $H_T$ close to $1$ and means that when inverting $H_T$, high frequencies of the solution can be well recovered, if they occur in the region $(a_2,a_3)$. The case $\lambda_n \to +\infty$ corresponds to $\sigma_n \to 0$. Thus, high frequencies of the solution on $(a_3,a_4)$ cannot be recovered stably. Figure \ref{fig:sing-func} shows a plot of the singular functions $f_n$ and $g_n$ for both cases.
\end{remark}

\begin{figure}[ht!]
     \begin{center}
     
        \subfigure{
            \label{fig:sing-func-0}
            \includegraphics[width=0.7\textwidth]{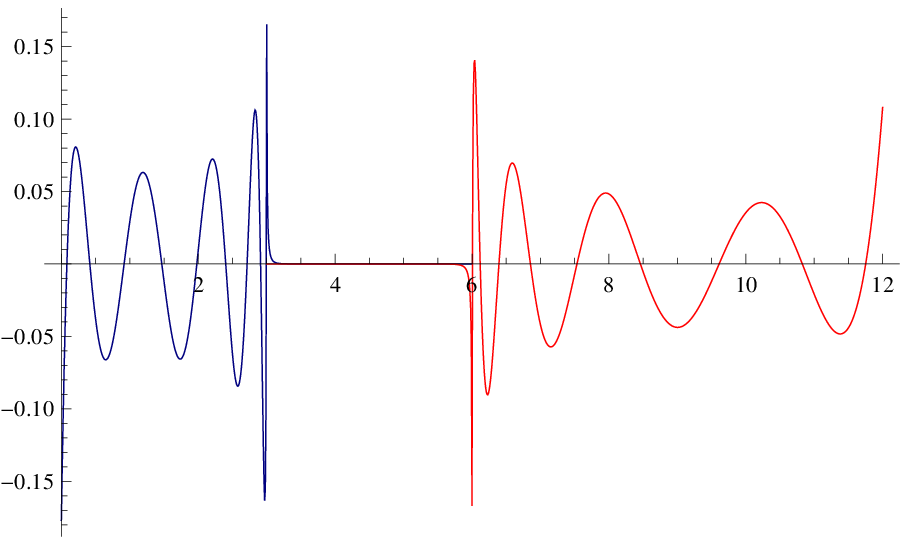}
        }\\
         \subfigure{
            \label{fig:sing-func-1}
            \includegraphics[width=0.7\textwidth]{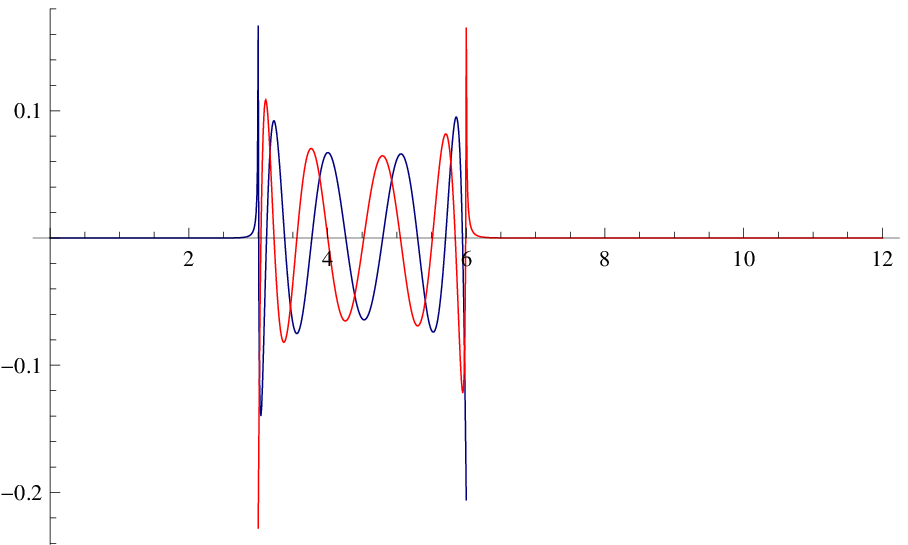}
        }
      \end{center}
    \caption{Examples of singular functions $f_n$ (red) and $g_n$ (blue) for $a_1=0, a_2=3, a_3=6, a_4=12$. \textbf{Top:} For $\sigma_n$ close to 0, the singular functions are exponentially small on $[3,6]$ and oscillate outside of $[3,6]$. \textbf{Bottom:} For $\sigma_n$ close to 1, the functions oscillate on $[3,6]$ and are exponentially small outside of the overlap region.
     }
   \label{fig:sing-func}
\end{figure}

\section{A procedure for finding the asymptotics of the singular functions}\label{sec:procedure}

We now want to study the asymptotic behavior of the eigenfunctions $f_n$ of $L_S$ and $g_n$ of $\tilde{L}_S$ as $\lambda_n \to +\infty$. In Section \ref{sigma-1} we will treat the case $\lambda_n \to -\infty$. Away from the singular points $a_i$ the solutions to the Sturm-Liouville problem for large eigenvalues are well approximated by the Wentzel-Kramers-Brillouin (WKB) method (see \cite{bender}). Close to the singularities, the solutions can be estimated by Bessel functions of the first and second kind. These two local asymptotic expansions can then be matched in the overlap of their regions of validity. This procedure was introduced for two other instances of the truncated Hilbert transform -- the interior problem and the truncated Hilbert transform \textit{with a gap} -- in \cite{k-t}, to which we refer for full details and proofs. 

\subsection{Outline of the construction of $g_n$ for $\lambda_n \to + \infty$} 
First, we start with a solution $g$ to 
\begin{equation}\label{diffeqn}
(L-\lambda) \phi = 0
\end{equation}
on $(a_1,a_2)$ and then require that it be bounded at $a_1$. We show that by analyticity, this solution extends to $\overline{\mathbb{C}} \backslash [a_2,a_4]$. Next, we extend $g$ to $(a_2,a_4)$ by analytic continuation via the upper half plane to $a_3^+$ and require $\R g(x+i0)$ to be bounded at $a_3$. With this, we can define $g$ as the function analytic on $\overline{\mathbb{C}} \backslash [a_2,a_4]$ and extended by $\R g(x+i0)$ on $(a_2,a_4)$. Then, $g$ satisfies the boundary conditions at $a_1^+$ and $a_3^-$ and we prove that it also fulfills the transmission conditions \eqref{tc1}, \eqref{tc2} at $a_2$ and hence is an eigenfunction of $\tilde{L}_S$. For large $\lambda$, the described procedure together with the local asymptotic behavior of solutions to $(L-\lambda) \phi = 0$ leads to finding the asymptotics of the eigenfunctions.

\subsection{Validity of the approach} The solution $g$ to $(L-\lambda) \phi = 0$ is bounded at $a_1$ and therefore analytic on $\mathbb{C} \backslash [a_2,a_4]$. Furthermore, $g(z) = \mathcal{O}(1/z)$ as $z \to \infty$ and $g$ is analytic at complex infinity (see \cite{k-t}). 
We want to construct a solution $g$ extended to $(a_2,a_4)$ that also satisfies the transmission conditions at $a_2$ and the boundary condition at $a_3$. This transition at $a_2$ is not analytic (see \cite{a-k}). In order to find the proper extension to $(a_2,a_4)$, we will make explicit that $g$ has to be the Hilbert transform of a function supported on $[a_2,a_4]$. To make use of this property, we first need to introduce the Riemann-Hilbert problem:

For a given function $f \in L^2(\gamma)$ on a simple smooth bounded oriented contour $\gamma \in \mathbb{C}$, find a function $F(z)$ such that
\begin{align}
&F(z) \text{ is analytic on }  \overline{\mathbb{C}} \backslash \gamma \label{an}\\
&F(z+i0) - F(z-i0) = 2 i f(z), \quad z \in \gamma \\
&F(z) \to 0 \text{ as } z \to \infty \label{anatinf}
\end{align}
This Riemann-Hilbert problem is known to have the unique solution 
\begin{equation}
F(z) = \frac{1}{\pi} \text{p.v.} \int_{\gamma} \frac{f(\tau)}{\tau-z} d\tau, \quad z \in \mathbb{C}.
\end{equation}
(see \cite{gakhov}, Sections 14.2 and 16.3). This statement can be used in a "reversed" sense: For any function $F$ analytic on $ \overline{\mathbb{C}} \backslash \gamma$ that satisfies \eqref{anatinf}, 
define the function $f$ on $\gamma$ to be
\begin{equation}
f(z) = \frac{F(z+i0)-F(z-i0)}{2i}, \quad z \in \gamma. \label{ffromF}
\end{equation}
If $f \in L^2(\gamma)$, then by construction, $F$ is the unique solution to the Riemann-Hilbert problem with right-hand side \eqref{ffromF}. Thus, $F(z) = 1/\pi \text{ p.v.} \int_{\gamma} f(\tau)/(\tau-z) d\tau$ on $\overline{\mathbb{C}} \backslash \gamma$.

Let $\gamma = [a_2,a_4]$, consider $g$ on $\overline{\mathbb{C}} \backslash [a_2,a_4]$ from above, i.e. $g$ is a solution to \eqref{diffeqn} and bounded at $a_1$, and define the function $f$ on $[a_2,a_3) \cup (a_3,a_4]$ to be 
\begin{equation}\label{f-plemelj}
f(x) := \frac{1}{2i} [g(x+i0) - g(x-i0)].
\end{equation} Clearly, $f \in L^2([a_2,a_4])$ because $g$ is analytic away from the points $a_i$ and is either bounded or has a logarithmic singularity close to the points $a_i$. With that, $F = g$ is the only solution to the corresponding Riemann-Hilbert problem by uniqueness. Let $g_{2,4}$ denote the extension of $g$ onto $(a_2,a_4)$. With a slight abuse of notation, we will denote the function $g$ extended by $g_{2,4}$ again by $g$. If we define $g_{2,4}(x) = \frac{1}{2} [g(x+i0) + g(x-i0)]$, the Plemelj-Sokhotksi formula yields that 
\begin{equation}\label{g-on-C}
g(x) \text{ extended by } g_{2,4}(x)  \quad \text{ on }  (a_2, a_4)
\end{equation}
is the Hilbert transform of $f(x)$ in \eqref{f-plemelj}, where $f$ is supported on $[a_2,a_4]$. Note that both $f(x)$ and $g_{2,4}(x)$ are solutions to $(L- \lambda) \phi = 0$, because they are linear combinations of solutions. 

For the construction of $g$ in Section \ref{sec:asymp-0}, it will be useful to express $g_{2,4}$ by the analytic continuation of $g$ via the upper half plane only, i.e. by $g(x+i0)$. This can be done as follows: Since $\lambda \in \mathbb{R}$, we can assume that $g$ is real-valued on $\mathbb{R} \backslash [a_2,a_4]$. Hence, $\I H f = 0$ on $\mathbb{R} \backslash [a_2,a_4]$ and thus, $f(x)$ is real-valued. Consequently, $g_{2,4}(x)$ is real-valued as well. If for two complex numbers $a$ and $b$, $a+b \in \mathbb{R}$ and $a-b \in \mathbb{I}$, then $\R a = \R b$ and $\I a = - \I b$. Thus, 
\begin{align}
g_{2,4}(x) &= \R g(x+i0), \label{g24}\\
f(x) &= \I g(x+i 0).
\end{align}
With these relations, we can now show that $g(x)$ in \eqref{g-on-C} satisfies the transmission conditions \eqref{tc1}, \eqref{tc2} at $a_2$.

With \eqref{dof1}, \eqref{dof2}, we can write $g$ in a neighborhood of $a_2^-$ as
\begin{align}
g(x) &=  \sum_{n=0}^{\infty} d_n (x-a_2)^n + \ln |x-a_2| \sum_{n=0}^{\infty} b_n (x-a_2)^n, \quad x<a_2 \label{g-a2}
\end{align}
Since $g$ is real-valued, $b_n, d_n \in \mathbb{R}$. 
The analytic continuation $g_c$ of $g$ from $a_2^-$ to $a_2^+$ via the upper half plane is
\begin{equation}\label{gc}
g_c(x) = \sum_{n=0}^{\infty} d_n (x-a_2)^n + (\ln |x-a_2|-i \pi) \sum_{n=0}^{\infty} b_n (x-a_2)^n, \quad x>a_2
\end{equation}
By \eqref{g-on-C}, \eqref{g24}, for $x$ to the right of $a_2$, $g$ is obtained by extracting the real part in \eqref{gc}. Comparing $ \R g_c(x)$ with \eqref{g-a2} then implies the transmission conditions \eqref{tc1} and \eqref{tc2} at $a_2$.

\begin{remark}
Another way to see that $g$ extended by $g_{2,4}$ satisfies the transmission conditions is the following: The function $f$ is a solution to $(L-\lambda) \phi = 0$ and thus is either bounded or of logarithmic singularity at $a_2$.  Suppose $f$ has a logarithmic singularity at $a_2$. Then, its Hilbert transform  will have a singularity at $a_2$ that is stronger than logarithmic. This is a contradiction to $g = H f$ being a solution to the differential equation $L g = \lambda g$. Therefore, $f$ has to be bounded at $a_2$. This implies that $g=H f$ satisfies the transmission conditions at $a_2$.
\end{remark}
The boundedness of $g_{2,4}$ at $a_3$ does not yet follow from the construction but has rather to be imposed explicitly. This is done by analytic continuation of $g$ from the interior of $(a_1,a_2)$ via the upper half plane to a neighborhood of $a_3^+$ and requiring $\R g(x+i0)$ to be bounded as $x \to a_3^+$. Using \eqref{dof1}, \eqref{dof2}, $g$ to the right of $a_3$ can be represented by
\begin{equation*}
g(x) =  \R \big[ \sum_{n=0}^{\infty} d_n (x-a_3)^n + \ln |x-a_3| \sum_{n=0}^{\infty} b_n (x-a_3)^n \big], \quad x>a_3. 
\end{equation*}
The requirement of boundedness then implies that the coefficient $b_0$ is purely imaginary. This together with using analytic continuation to express $g$ to the left of $a_3$ by
\begin{equation*}
g(x) =  \R \big[ \sum_{n=0}^{\infty} d_n (x-a_3)^n + (\ln |x-a_3|+i \pi) \sum_{n=0}^{\infty} b_n (x-a_3)^n \big] \quad x<a_3 
\end{equation*}
then yields that $g$ is also bounded at $a_3^-$.

Thus, requiring boundedness of $g$ at $a_3^+$ is sufficient to obtain that it is also bounded at $a_3^-$. This is useful because it allows for a procedure where the WKB approximation only needs to be matched to Bessel solutions on intervals where the solution is oscillatory, i.e. on $(a_1,a_2)$ and $(a_3,a_4)$.  In these intervals, we can make use of the results from \cite{k-t}, where the asymptotics of the solutions to $(L-\lambda) \phi=0$ close to the points $a_i$ were obtained in the regions where the solutions oscillate.

\section{Asymptotic analysis of the singular functions and singular values for $\sigma_n \to 0$}\label{sec:asymp-0}

In this section we want to make more precise the method motivated in the previous section. First, we need to introduce the WKB method.

As outlined in the remark at the end of Section \ref{twoacc}, for $\lambda > 0$ large, the solution $g$ to $(L-\lambda) \phi = 0$ is oscillatory where $P$ is negative, i.e. on $(a_1,a_2) \cup (a_3,a_4)$ and monotonic where $P$ is positive. We approximate the solution $g$ on $(a_1,a_2)$ away from the endpoints by the WKB method. Then, we require that $g$ be bounded at $a_1$ by matching it with a bounded local solution at $a_1^+$, which is approximated by a Bessel function of the first kind. Local solutions of \eqref{diffeqn} close to the singular points $a_i$ are approximated by linear combinations of Bessel functions of the first and second kind, \cite{k-t}. In what follows we will refer to solutions of this type as Bessel solutions. The next step is to analytically continue the WKB approximation via the upper half plane to the region to the right of $a_3$. This will be an approximation to the solution $g$ in that region because the WKB approximation is valid with a uniform accuracy (see \cite{k-t}). Recall that on $(a_2,a_4)$, $g$ is defined as $g_{2,4}(x) =  \text{Re } g(x+i0)$. At $a_3^+$, we require boundedness of $\R g(x+i0)$ by matching it with a Bessel solution of which the coefficient in front of the unbounded part is purely imaginary. As will be seen, this requirement leaves us with a discrete set $\{ \lambda_n\}_{n \in \mathbb{N}}$ for which $L_S g_n = \lambda_n g_n$, $\lambda_n \to + \infty$.

\begin{figure}[h!]
     \begin{center}
            \includegraphics[width=1\textwidth]{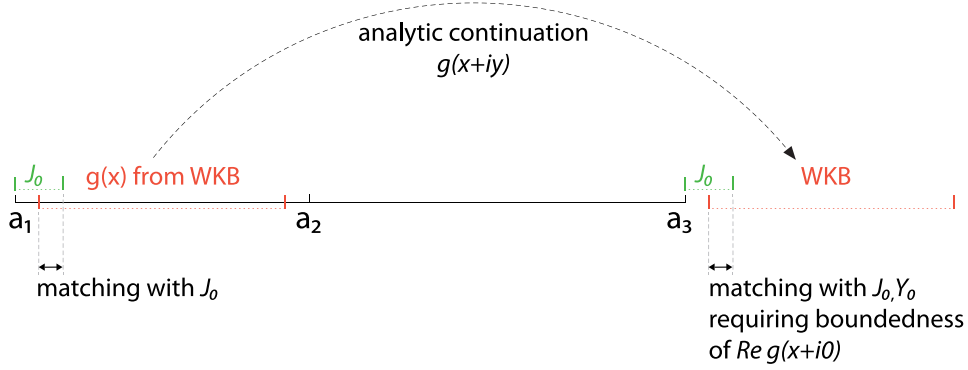}
     \end{center}
     \caption{Sketch of the construction of the $g_n$'s from WKB and Bessel approximations.}
         \label{fig:roi}
\end{figure}

In what follows we will use the following two quantities:
\begin{equation*}
K_- := \int_{a_1}^{a_2} \frac{1}{\sqrt{-P(x)}} dx,\ 
K_+ := \int_{a_2}^{a_3} \frac{1}{\sqrt{P(x)}} dx.
\end{equation*}
One can show that also
\begin{equation*}
K_- = \int_{a_3}^{a_4} \frac{1}{\sqrt{-P(x)}} dx
\end{equation*}
holds (see \cite{k-t}).
\subsection{The WKB approximation and its region of validity}
 We consider the WKB method in order to obtain approximations for solutions $g$ to $L g = \lambda g$ and large $\lambda$ on the interior of the intervals where the solutions oscillate, i.e. on $[a_1+\delta,a_2-\delta]$ and $[a_3+\delta,a_4-\delta]$ (for some small $\delta$ to be defined). We start by taking a solution on $(a_1,a_2)$ and define $\epsilon := 1/\sqrt{\lambda} $. 
Let $C_0^+$ be the upper half of the complex plane including the real line and let $a^-$ and $a_{3,4}^*$ be arbitrary but fixed real numbers for which $a^-<a_1$ and $a_{3,4}^* \in (a_3,a_4)$. It has been shown in \cite{k-t}, that for sufficiently small $\mu_1>0$ a connected region $\Lambda_- \subset C_0^+$  exists, such that 
$\Lambda_-$ contains the segment $[a^-,a_{3,4}^*]$, except for $\mo(\epsilon^{2(1-\mu_1)})$ size neighborhoods of $a_1, a_2, a_3$ and such that the following holds:
\begin{theorem}{(B.3 in \cite{k-t})}\label{thmwkb}

Using the WKB method, for every sufficiently small $\mu_1 > 0$ independent of $\epsilon$, the solutions of $(L-\lambda) \phi=0$ are linear combinations of
\begin{align}
\hat{\phi}_1(z) &= \frac{1}{P(z)^{1/4}} e^{i \sqrt{\lambda} \int_a^{z} \frac{d \xi}{\sqrt{P(\xi)}}} \big( 1+\mo(\e^{\mu_1})\big), \label{wkb1}\\
\hat{\phi}_2(z) &= \frac{1}{P(z)^{1/4}} e^{-i \sqrt{\lambda} \int_a^{z} \frac{d \xi}{\sqrt{P(\xi)}}}\big( 1+\mo(\e^{\mu_1})\big)\label{wkb2},
\end{align}
where the accuracy $\mo(\e^{\mu_1})$ is uniform in the region $\Lambda_-$. The point $a$ can, for example, be chosen to be $a_1, a_2$ or $a_3$.
\end{theorem}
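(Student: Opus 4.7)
The strategy is the classical Liouville--Green construction, upgraded from a formal expansion to a rigorous bound on the complex domain $\Lambda_-$ by a Volterra fixed-point argument. The key quantitative input is that $\Lambda_-$ is designed in \cite{k-t} to avoid the turning points $a_1,a_2,a_3$ by only $\e^{2(1-\mu_1)}$, which is exactly the threshold at which the WKB correction remains uniformly small.

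First, I would derive the formal approximations by substituting the ansatz $\phi(z)=A(z)e^{T(z)/\e}$, with $\e=1/\sqrt\lambda$, into $(L-\lambda)\phi=0$ and matching powers of $\e$. The order $\e^{-2}$ balance yields the eikonal equation $(T')^2=-1/P$, giving $T(z)=\pm i\int_a^z d\xi/\sqrt{P(\xi)}$ with a branch of $\sqrt P$ chosen on $\Lambda_-\cap C_0^+$ so that the resulting $\hat\phi_{1,2}$ oscillate where $P<0$ and grow or decay where $P>0$. The order $\e^{-1}$ transport equation then fixes $A(z)=P(z)^{-1/4}$, producing the two leading pieces $\tilde\phi_{1,2}(z)$ that appear in the statement multiplied by $1+\mo(\e^{\mu_1})$.

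Next, I would pass from formal to true solutions. A direct calculation yields $(L-\lambda)\tilde\phi_j=\tilde\phi_j\cdot R(z)$, where $R$ is rational in $P,P',P''$, with a $|z-a_i|^{-3/2}$ blow-up at each singular point and $\mo(1)$ size in the bulk of $\Lambda_-$. Seeking exact solutions as $\hat\phi_j=\tilde\phi_j(1+\eta_j)$ and applying reduction of order with the pair $\tilde\phi_1,\tilde\phi_2$ converts the problem into a Volterra integral equation
$$\eta_j(z)=\int_{z_0}^z K(z,\zeta)\bigl(1+\eta_j(\zeta)\bigr)\,d\zeta,$$
whose kernel is $\tilde\phi_1(\zeta)\tilde\phi_2(\zeta) R(\zeta)$ divided by the Wronskian, the latter being constant to leading order and of size $\sqrt\lambda$. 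I would estimate $|K|$ along any admissible path in $\Lambda_-$ by splitting it into the bulk (where $|P|$ is bounded below, giving a contribution $\mo(\e)$) and the intersections with the $\e^{2(1-\mu_1)}$-disks around each $a_i$; near $a_i$ the factors $P^{-1/4}$ and $R$ inflate by $\e^{-(1-\mu_1)/2}$ and $\e^{-3(1-\mu_1)}$ respectively, but the integration length $\e^{2(1-\mu_1)}$ and the $\e$ saved from the Wronskian normalization combine to leave a net size $\mo(\e^{\mu_1})$ when $\mu_1$ is chosen small enough. The Neumann series then converges uniformly on $\Lambda_-$ and gives $\eta_j=\mo(\e^{\mu_1})$ as claimed.

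The main obstacle is precisely this balance between singularities and exclusion radii. The exponent $2(1-\mu_1)$ is the sharpest one that still compensates the algebraic blow-ups in $R$ and $P^{-1/4}$: a smaller exclusion radius would cause the Neumann series to diverge, while a larger one would shrink $\Lambda_-$ so much that it could no longer contain the segment $[a^-,a_{3,4}^*]$ required for the matching with the Bessel-type local solutions used in Section \ref{sec:asymp-0}. Verifying that $\Lambda_-$ is simply connected, so that the Volterra iterate defines $\eta_j$ single-valuedly, and that the chosen branches of $\sqrt P$ and $P^{1/4}$ extend coherently across $\Lambda_-\cap C_0^+$, are the geometric technicalities that underlie the construction of $\Lambda_-$ in \cite{k-t}.
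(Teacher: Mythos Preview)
The paper does not prove this statement at all: Theorem~\ref{thmwkb} is quoted verbatim as ``(B.3 in \cite{k-t})'' and used as a black box, so there is no proof in the present paper to compare your attempt against. Your sketch is the standard rigorous Liouville--Green/WKB construction (formal eikonal and transport, residual computation, Volterra integral equation with Neumann-series control), which is almost certainly the route taken in \cite{k-t} as well.

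One small caution on your bookkeeping: in the Volterra kernel the exponential factors of $\tilde\phi_1$ and $\tilde\phi_2$ cancel, so the amplitude factor that appears is $P^{-1/2}$ rather than $P^{-1/4}$, and the way the various powers of $\e$ combine near the excluded disks is cleaner if organized through an Olver-type error-control function $F(z)=\int |q^{-1/4}(q^{-1/4})''|\,|d\zeta|$ with $q=-1/P$. Near $a_i$ this integrand behaves like $|z-a_i|^{-3/2}$, so $F$ blows up like $|z-a_i|^{-1/2}\sim \e^{-(1-\mu_1)}$ at the boundary of the exclusion disk, and the overall error is $\e\cdot F=\mo(\e^{\mu_1})$. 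This matches your conclusion, but the intermediate accounting you give (separately inflating $P^{-1/4}$ and $R$ and then multiplying by the path length) does not quite add up as written; it is worth redoing that paragraph with the kernel written out explicitly before you rely on it.
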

The same holds in a region $\Lambda_+ \subset C_0^+$ which contains the segment $[a_{1,2}^*,a^+]$ except for $\mo(\epsilon^{2(1-\mu_1)})$ size neighborhoods of $a_2, a_3, a_4$. Here, $a^+$ and $a_{1,2}^*$ are arbitrary but fixed numbers such that $a^+>a_4$ and $a_{1,2}^* \in (a_1,a_2)$, see \cite{k-t}. Figure \ref{fig:lambda} shows a sketch of the two regions $\Lambda_-$ and $\Lambda_+$.

\begin{figure}[ht!]
     \begin{center}
     
        \subfigure{
            \label{fig:lambda-minus}
            \includegraphics[width=0.45\textwidth]{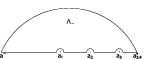}
        }
         \subfigure{
            \label{fig:lambda-plus}
            \includegraphics[width=0.45\textwidth]{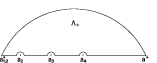}
        }
      \end{center}
    \caption{Sketches of the regions $\Lambda_-$ and $\Lambda_+$ on which the WKB approximations are valid with uniform accuracy.
     }
   \label{fig:lambda}
\end{figure}

\subsection{The Bessel solutions and their region of validity}\label{bessel-validity}
For $x \in (a_1,a_2) \cup (a_3,a_4)$ define $t = -\lambda (x-a_i) / P'(a_i)$ for fixed $i=1, \dots, 4$ and let $\mu_2$ be a small positive parameter independent of $\lambda$.

Then, the two linearly independent solutions to $(L-\lambda)\phi=0$ in a region $x-a_i = \mo(\epsilon^{2 \mu_2})$ for $t \in [0,1)$ have the properties
\begin{align}
\hat{\psi}_1(x-a_i) &= J_0(2 \sqrt{t}) + \mo(t/\lambda)  \label{J0-t-small} \\
&= J_0(2 \sqrt{t}) + \mo(\e^{2 \mu_2}) \nonumber \\
\hat{\psi}_2(x-a_i) &= Y_0(2 \sqrt{t}) + \mo(t^{1/2}/ \lambda)  \label{Y0-t-small} \\
&= Y_0(2 \sqrt{t}) + \mo(\e^{1+\mu_2}) \nonumber
\end{align}
and for $t \in [1,\mo(\epsilon^{2(\mu_2-1)})]$ 
\begin{align}
\hat{\psi}_1(x-a_i) &= J_0(2 \sqrt{t}) + t^{-1/4} \mo(\e^{2\mu_2}) \label{J0-t-large}\\
\hat{\psi}_2(x-a_i) &= Y_0(2 \sqrt{t}) + t^{-1/4} \mo(\e^{2\mu_2}) \label{Y0-t-large}
\end{align}
where $J_0$ and $Y_0$ denote the Bessel functions of the first and second kind, respectively \cite{k-t}.
\begin{lemma}[Properties of $J_0$ and $Y_0$]
The following holds for small arguments $0<z \ll 1$:
\begin{align}
J_0(z) &\to 1 \label{asymp-J0-small} \\
Y_0(z) &\sim \frac{2}{\pi} [\ln(z) + \gamma], \label{asymp-Y0-small}
\end{align}
where $\gamma$ denotes the Euler--Mascheroni constant. The asymptotic behavior for arguments  $z \to +\infty$ is
\begin{align}
J_0(z) &= \sqrt{\frac{2}{\pi z}} \big[\cos(z-\frac{\pi}{4}) + \mathcal{O}(1/z)\big] \label{asymp-J0-large} \\
Y_0(z) &= \sqrt{\frac{2}{\pi z}} \big[\sin(z-\frac{\pi}{4}) + \mathcal{O}(1/z)\big] \label{asymp-Y0-large}
\end{align}
\end{lemma}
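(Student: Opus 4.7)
The statements collected in this lemma are standard facts about the Bessel functions $J_0$ and $Y_0$, and my plan is to recover each of them from classical representations rather than to develop any new machinery. Both $J_0$ and $Y_0$ are solutions of Bessel's equation $z^2 y'' + z y' + z^2 y = 0$ of order zero, and the four claims split cleanly into a small-argument regime and a large-argument regime.

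For the small-argument behavior \eqref{asymp-J0-small}--\eqref{asymp-Y0-small}, I would invoke the convergent power series representations. Directly from the Frobenius construction at $z = 0$ one has
\begin{equation*}
J_0(z) = \sum_{k=0}^{\infty} \frac{(-1)^k}{(k!)^2}\left(\frac{z}{2}\right)^{2k},
\end{equation*}
which gives $J_0(z) = 1 + O(z^2)$ and hence \eqref{asymp-J0-small}. For $Y_0$, the standard second-solution construction at the regular singular point $z=0$ (obtained, for instance, by reduction of order against $J_0$, or equivalently from the limit definition $Y_\nu = (\cos\nu\pi\, J_\nu - J_{-\nu})/\sin\nu\pi$ as $\nu \to 0$) yields
\begin{equation*}
Y_0(z) = \frac{2}{\pi}\Big(\ln(z/2) + \gamma\Big) J_0(z) - \frac{2}{\pi}\sum_{k=1}^{\infty}\frac{(-1)^k H_k}{(k!)^2}\left(\frac{z}{2}\right)^{2k},
\end{equation*}
where $H_k = 1 + 1/2 + \cdots + 1/k$. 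Combined with $J_0(z)\to 1$ this gives the asymptotic relation \eqref{asymp-Y0-small} (the $\ln(z/2)$ differs from $\ln z$ only by an absorbed constant).

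For the large-argument expansions \eqref{asymp-J0-large}--\eqref{asymp-Y0-large}, the cleanest route is through the Hankel functions $H_0^{(1)}$, $H_0^{(2)}$, which satisfy $J_0 = \tfrac12(H_0^{(1)}+H_0^{(2)})$ and $Y_0 = \tfrac1{2i}(H_0^{(1)}-H_0^{(2)})$. Applying the method of steepest descent to the integral representation
\begin{equation*}
H_0^{(1)}(z) = \frac{1}{\pi i}\int_{C} e^{i z \sinh t}\, dt
\end{equation*}
(with $C$ the standard Sommerfeld contour) produces, after the routine saddle-point calculation at $t=0$, the asymptotic expansion
\begin{equation*}
H_0^{(1)}(z) = \sqrt{\frac{2}{\pi z}}\, e^{i(z - \pi/4)}\big(1 + O(1/z)\big),
\end{equation*}
and similarly for $H_0^{(2)}$ with the conjugate phase. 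Taking real and imaginary parts recovers \eqref{asymp-J0-large}--\eqref{asymp-Y0-large}. The only part that requires any real care is checking that the $O(1/z)$ error is uniform on the positive real axis so that it matches the formulation we use later when we send $z = 2\sqrt{t}$ with $t$ large; this uniformity is standard and already available in Watson's treatise, so no genuine obstacle arises. Since the entire lemma is classical, in the paper proper one could equivalently just cite, e.g., Abramowitz--Stegun or Olver, rather than reproduce these derivations.
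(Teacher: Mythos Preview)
Your sketch is correct: these are all standard facts about $J_0$ and $Y_0$, and the derivations you outline (Frobenius series at $z=0$, steepest descent on the Hankel integral for $z\to\infty$) are the classical ones. The paper, however, does not prove this lemma at all; it is stated without proof as a collection of well-known properties and then used as input for the matching arguments that follow. So your final sentence is exactly right: in the paper proper the authors simply record these asymptotics, and a citation to a standard reference would serve the same purpose as your sketch. There is no discrepancy in approach to discuss, since the paper offers none.
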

\subsection{Overlap region of validities}

If $1-\mu_1 > \mu_2$ and $x \in (a_1,a_2) \cup (a_3,a_{3,4}^*)$, both the WKB solutions \eqref{wkb1}, \eqref{wkb2}, with accuracy $\mo(\e^{\mu_1})$, and the Bessel solutions \eqref{J0-t-large}, \eqref{Y0-t-large}, with accuracy $\mo(\e^{2 \mu_2})$, are valid in the region 
\begin{equation}\label{overlapregion}
C_1 \e^{2(1-\mu_1)} < |x-a_i| < C_2 \e^{2 \mu_2}
\end{equation}
for positive constants $C_1, C_2$ and $i=1,2,3$ (Corollary B.11, \cite{k-t}). This also holds for $x \in (a_{1,2}^*,a_2) \cup (a_3,a_4)$ and $i=2, 3, 4$.

\subsection{Derivation of the asymptotics}\label{subsec:asymptotics-of-g}

\subsubsection{The WKB approximation in $(a_1,a_2)$ away from the endpoints}

Using \eqref{wkb1} and \eqref{wkb2} with $a=a_1$, the WKB solution to $L g = \lambda g$ is 

\begin{align}
g(x) = \frac{1}{(-P(x))^{1/4}} \Big[ & \cos \Big( \frac{1}{\epsilon} \int_{a_1}^x \frac{dt}{\sqrt{-P(t)}} -\frac{\pi}{4} \Big) \cdot (1+ \mathcal{O}(\epsilon^{\mu_1})) + \label{wkb-a1} \\ 
+&c_1  \sin \Big( \frac{1}{\epsilon} \int_{a_1}^x \frac{dt}{\sqrt{-P(t)}} -\frac{\pi}{4} \Big) \cdot (1+ \mathcal{O}(\epsilon^{\mu_1})) \Big] \nonumber
\end{align}
for a constant $c_1$ and it is valid on $x \in [a_1+ \mathcal{O}(\epsilon^{2(1-\mu_1)}),a_2 - \mathcal{O}(\epsilon^{2(1-\mu_1)})]$. Here we have assumed without loss of generality that the constant in front of the cosine term is equal to $1$.

\subsubsection{Bounded Bessel solution at $a_1^+$}\label{subsec:bessel-at-a1}

Let $\hat{\psi}_1(x)$ and $\hat{\psi}_2(x)$ denote the two linearly independent solutions in the region $x-a_1 = \mo(\epsilon^{2 \mu_2})$. The boundedness of $g$ in this region requires that for $t = \lambda (a_1-x)/P'(a_1)$, $t \in [0,1)$, and constants $b_1$ and $b_2$ in
\begin{equation}
g(x) = b_1 \cdot \hat{\psi}_1(x-a_1) + b_2 \cdot \hat{\psi}_2(x-a_1),
\end{equation}
the coefficient $b_2$ be equal to zero. Thus, for $t \in [1, \mathcal{O}(\epsilon^{2(\mu_2-1)})]$
\begin{equation}\label{bdd}
g(x) = b_1 \cdot [J_0(2 \sqrt{t}) + t^{-1/4} \mathcal{O}(\epsilon^{2 \mu_2})].
\end{equation}

The two solutions \eqref{wkb-a1}, \eqref{bdd} need to be matched in the overlap region in which they are both valid, i.e. for $x$ such that
\begin{equation}\label{overlap-at-a1}
\mo(\e^{2(1-\mu_1)}) \leq x-a_1 \leq \mo(\e^{2 \mu_2}).
\end{equation}
For this, we approximate the arguments in the WKB approximation \eqref{wkb-a1}:
\begin{equation}\label{discr1}
\frac{1}{(-P(x))^{1/4}} = \frac{1+\mo(x-a_1)}{((a_1-x) P'(a_1))^{1/4}}
\end{equation}
and for the arguments in the trigonometric expressions we obtain
\begin{align}
\int_{a_1}^x \frac{dt}{\sqrt{-P(t)}} &= \int_{a_1}^x \frac{1+\mo(t-a_1)}{\sqrt{P'(a_1)(a_1-t)}} dt \\
&= \frac{1}{(-P'(a_1))^{1/2}} \int_{a_1}^x \frac{dt}{\sqrt{t-a_1}} + \int_{a_1}^x \mo \big( (t-a_1)^{1/2}\big) dt \nonumber \\
&=2 \sqrt{\frac{a_1-x}{P'(a_1)}} + \mo((x-a_1)^{3/2}). \nonumber
\end{align}
With a Taylor expansion of the cosine/sine, we then get
\begin{align}
\cos \Big( \frac{1}{\e} \int_{a_1}^x \frac{dt}{\sqrt{-P(t)}} - \frac{\pi}{4} \Big) &= \cos\Big( \frac{2}{\e} \sqrt{\frac{a_1-x}{P'(a_1)}} - \frac{\pi}{4} \Big) + \mo((x-a_1)^{3/2}/\e) \label{cos} \\ 
\sin \Big( \frac{1}{\e} \int_{a_1}^x \frac{dt}{\sqrt{-P(t)}} - \frac{\pi}{4} \Big) &= \sin \Big( \frac{2}{\e} \sqrt{\frac{a_1-x}{P'(a_1)}} - \frac{\pi}{4} \Big) + \mo((x-a_1)^{3/2}/\e) \label{sin}
\end{align}
Since $x-a_1$ lies in the overlap region \eqref{overlap-at-a1}, the following holds
\begin{equation}\label{xinoverlap}
\mo((x-a_1)^{3/2}/\e) = \mo(\e^{3 \mu_2-1}).
\end{equation}
Inserting \eqref{xinoverlap} in \eqref{cos} and \eqref{sin},
we obtain for the WKB solution in the overlap region of validity:
\begin{align}
g(x) =& \frac{1+\mo(x-a_1)}{((a_1-x) P'(a_1))^{1/4}} \cdot \Big[  \cos \Big( \frac{2}{\epsilon} \sqrt{\frac{a_1-x}{P'(a_1)}} -\frac{\pi}{4} \Big) +  \label{wkba1}  \\ 
& c_1  \sin \Big( \frac{2}{\epsilon} \sqrt{\frac{a_1-x}{P'(a_1)}}  -\frac{\pi}{4} \Big) + \mo(\e^{\text{min} \{ \mu_1, 3 \mu_2-1\}}) \Big]  \nonumber \\
=& \frac{1}{((a_1-x) P'(a_1))^{1/4}} \cdot \Big[  \cos \Big( \frac{2}{\epsilon} \sqrt{\frac{a_1-x}{P'(a_1)}} -\frac{\pi}{4} \Big) + \nonumber \\ 
&c_1  \sin \Big( \frac{2}{\epsilon} \sqrt{\frac{a_1-x}{P'(a_1)}}  -\frac{\pi}{4} \Big) + \mo(\e^{\text{min} \{ \mu_1, 3 \mu_2-1,2\mu_2\}}) \Big].\nonumber
\end{align}
We now select $\mu_1$ and $\mu_2$ such that the error term in the last equation tends to zero and such that $1-\mu_1 > \mu_2$. A convenient choice is
\begin{equation}\label{mu1mu2}
\mu_1 = \frac{1}{2}-\delta, \quad \mu_2 = \frac{1}{2}-\frac{\delta}{3}.
\end{equation}
for a small fixed $\delta >0$, as it was done in \cite{k-t}. The WKB solution has to be matched with the Bessel solution in \eqref{bdd} in the overlap region \eqref{overlap-at-a1}. We do this by matching the two solutions as $t \to \infty$ and exploiting the asymptotics \eqref{asymp-J0-large} of the Bessel function $J_0$,
which gives
\begin{equation*}
g(x) = b_1 \sqrt{\e} \Big( \frac{P'(a_1)}{a_1-x} \Big)^{1/4} \Big[ \frac{1}{\sqrt{\pi}} \cos \big( \frac{2}{\e} \sqrt{\frac{a_1-x}{P'(a_1)}} - \frac{\pi}{4} \big) + \mo\big( \frac{\e}{\sqrt{x-a_1}} \big) + \mo \big( \e^{1-2\delta/3}\big) \Big].
\end{equation*}
From $(x-a_1)^{-1/2} = \mo\big( \e^{-(\delta+1/2)}\big)$, we conclude
\begin{equation}\label{bessela1}
g(x) = b_1 \sqrt{\e} \Big( \frac{P'(a_1)}{a_1-x} \Big)^{1/4} \Big[ \frac{1}{\sqrt{\pi}} \cos \big( \frac{2}{\e} \sqrt{\frac{a_1-x}{P'(a_1)}} - \frac{\pi}{4} \big) +  \mo \big( \e^{1/2-\delta}\big) \Big].
\end{equation}
Matching the two solutions \eqref{wkba1} and \eqref{bessela1} determines the constants $b_1$ and $c_1$:
\begin{align}
b_1 &= \sqrt{\frac{\pi}{-\e P'(a_1)}} \big( 1+ \mo(\e^{1/2-\delta}) \big),\label{b1} \\
c_1 &= \mo(\e^{1/2-\delta}). \label{c1}
\end{align}
Thus, the solution $g$ is of the form
\begin{align}
g(x) =& \frac{1}{(-P(x))^{1/4}} \Big[ \cos \Big( \frac{1}{\epsilon} \int_{a_1}^x \frac{dt}{\sqrt{-P(t)}} -\frac{\pi}{4} \Big) \cdot (1+ \mathcal{O}(\epsilon^{1/2-\delta})) \label{wkba1final} \\
&+  \mathcal{O}(\epsilon^{1/2-\delta}) \sin \Big( \frac{1}{\epsilon} \int_{a_1}^x \frac{dt}{\sqrt{-P(t)}} -\frac{\pi}{4} \Big) \Big] \nonumber
\end{align}
on the interval $x \in [a_1 + \mo(\e^{1+2\delta}), a_2 - \mo(\e^{1+2\delta})]$.

\subsubsection{Analytic continuation to $a_3^+$}
The next step consists of analytically continuing $g$ in \eqref{wkba1final} to $a_3^+$ via the upper half plane. Since the WKB approximation is valid in $\Lambda_-$ with uniform accuracy $\mo(\e^{1/2-\delta})$ (Theorem \ref{thmwkb}), the analytic continuation of the WKB approximation \eqref{wkba1final} is an approximation to the analytic continuation of $g$. Taking into account the phase shifts of $P$ and using 
\begin{align}
\int_{a_1}^x \frac{dt}{\sqrt{-P(t)}} &= \int_{a_1}^{a_2} \frac{dt}{\sqrt{-P(t)}} + i \int_{a_2}^{a_3} \frac{dt}{\sqrt{P(t)}} - \int_{a_3}^x \frac{dt}{\sqrt{-P(t)}} \label{phaseshift1}  \\
&=K_- + i K_+ - \int_{a_3}^x \frac{dt}{\sqrt{-P(t)}}, \nonumber
\end{align}
we obtain
\begin{align*}
g(x+i0) = \qquad \qquad & \\
=\frac{i}{(-P(x))^{1/4}} \cdot \Big[& \cos \Big( \frac{1}{\epsilon} \int_{a_3}^x \frac{dt}{\sqrt{-P(t)}} - \frac{K_-}{\epsilon} - i \frac{K_+}{\epsilon} + \frac{\pi}{4}  \Big) \cdot \big( 1+ \mo(\e^{1/2-\delta}) \big) \\
&- \sin \Big(  \frac{1}{\epsilon} \int_{a_3}^x \frac{dt}{\sqrt{-P(t)}} - \frac{K_-}{\epsilon} - i \frac{K_+}{\epsilon} + \frac{\pi}{4}  \Big) \cdot \mo(\e^{1/2-\delta}) \Big], 
\end{align*}
where $x \in [a_3+\mo(\e^{1+2\delta}),a^*_{3,4}]$.  
The properties of the complex valued trigonometric functions yield
\begin{align*}
g(x+i0) =& \frac{i}{(-P(x))^{1/4}} \cdot \Big[ \Big\{ \cos \Big( \frac{1}{\epsilon} \int_{a_3}^x \frac{dt}{\sqrt{-P(t)}} - \frac{K_-}{\epsilon} + \frac{\pi}{4} \Big) \cdot \cosh \Big(  - \frac{K_+}{\epsilon}   \Big) \\
&-i \sin \Big( \frac{1}{\epsilon} \int_{a_3}^x \frac{dt}{\sqrt{-P(t)}} - \frac{K_-}{\epsilon} + \frac{\pi}{4} \Big) \cdot \sinh \Big(  - \frac{K_+}{\epsilon}   \Big) \Big\}  \cdot \big( 1+ \mo(\e^{1/2-\delta}) \big) \\
&+ \mo \big( \e^{1/2-\delta} \big) \cdot \Big\{ \sin \Big(  \frac{1}{\e} \int_{a_3}^x \frac{dt}{\sqrt{-P(t)}} - \frac{K_-}{\e} + \frac{\pi}{4} \Big) \cdot \cosh \Big( - \frac{K_+}{\e} \Big) \\
&+ i \cos \Big(\frac{1}{\epsilon} \int_{a_3}^x \frac{dt}{\sqrt{-P(t)}} - \frac{K_-}{\e} + \frac{\pi}{4} \Big) \cdot \sinh \Big( - \frac{K_+}{\e} \Big) \Big\} \Big].
\end{align*}
So far, $g$ is a function that is not normalized on $[a_1,a_3]$. However, we will need to work with singular functions that have their $L^2$-norm equal to $1$ on $[a_1,a_3]$ in order to estimate the singular values correctly. Thus, we incorporate $\| g\|_{L^2([a_1,a_3])}$ derived in \eqref{normg} in the Appendix, simplify the above expression and use the relation $\sin x = \cos(x-\frac{\pi}{2})$, to obtain a new normalized function $g$:
\begin{align}
g(x+i0) = \sqrt{\frac{2}{K_-}}\frac{- e^{K_+/\e}}{2 (-P(x))^{1/4}} \cdot \Big[ &\cos \big( \frac{1}{\e} \int_{a_3}^x \frac{dt}{\sqrt{-P(t)}} - \frac{K_-}{\e} - \frac{\pi}{4} \big) \label{wkba3plus}  \\
+ i  \sin \big( \frac{1}{\e} & \int_{a_3}^x \frac{dt}{\sqrt{-P(t)}} - \frac{K_-}{\e} - \frac{\pi}{4} \big) + \mo \big( \e^{1/2-\delta} \big) \Big] \nonumber
\end{align}

Next, we match this solution to a linear combination of Bessel approximations at $a_3^+$ and then require boundedness of its real part. 

In the overlap region \eqref{overlapregion} close to $a_3^+$ where both the WKB and the Bessel solution are valid, we define $t = \lambda (a_3-x)/P'(a_3)$. The function $P$ in \eqref{wkba3plus} can be approximated in the same way as it was done at $a_1^+$ in \eqref{discr1} -- \eqref{sin}. Factorizing the trigonometric expression, the WKB solution \eqref{wkba3plus} can then be written as
\begin{align}
g(x+i0) = &- \sqrt{\frac{1}{2 K_-}}\frac{ e^{K_+/\e}}{((a_3-x) P'(a_3))^{1/4}} e^{-i K_-/\e} \cdot \big[ \cos(\frac{2}{\e} \sqrt{\frac{a_3-x}{P'(a_3)}}-\frac{\pi}{4})  \label{wkba3-overlap} \\
& + i \sin (\frac{2}{\e} \sqrt{\frac{a_3-x}{P'(a_3)}}-\frac{\pi}{4}) + \mo(\e^{1/2-\delta})\big]. \nonumber
\end{align}
In the overlap region \eqref{overlapregion} the Bessel solution is a linear combination of the solution $\hat{\psi}_1$ in \eqref{J0-t-large} bounded at $a_3^+$ and the solution $\hat{\psi}_2$ in \eqref{Y0-t-large} having a singularity at $a_3^+ $:
\begin{equation}\label{bessela3-both}
g(x) = b_3 \hat{\psi}_1(x-a_3) + c_3 \hat{\psi}_2(x-a_3)
\end{equation}
for constants $b_3$ and $c_3$.  To ensure boundedness at $a_3^+$ of the real part of \eqref{bessela3-both}, we will need to impose $\R c_3 = 0$. The asymptotics of $J_0$ and $Y_0$ in \eqref{asymp-J0-large} and \eqref{asymp-Y0-large} for large $t$ allow to write the Bessel solution similarly to \eqref{bessela1} but with an additional term in $Y_0$:
\begin{align}
g(x) =  \sqrt{\frac{\e}{\pi}} \Big( \frac{P'(a_3)}{a_3-x}\Big)^{1/4} \Big( &b_3 \big[ \cos (\frac{2}{\e} \sqrt{\frac{a_3-x}{P'(a_3)}}-\frac{\pi}{4}) + \mo(\e^{1/2-\delta})\big] + \label{bessela3-overlap} \\
+ &c_3 \big[ \sin (\frac{2}{\e} \sqrt{\frac{a_3-x}{P'(a_3)}}-\frac{\pi}{4}) + \mo(\e^{1/2-\delta})\big] \Big). \nonumber
\end{align}
From the matching of \eqref{wkba3-overlap} with \eqref{bessela3-overlap} for $t \to \infty$, we obtain
\begin{equation}\label{b3c3}
b_3 = - i c_3 (1+\mo(\e^{1/2-\delta})).
\end{equation}
The requirement $\R c_3 = 0$ then implies $\I b_3 = \I c_3 \cdot \mo(\e^{1/2-\delta})$ and hence 
\begin{equation}
b_3 = \R b_3 \cdot (1+\mo(\e^{1/2-\delta}))
\end{equation}
or more explicitly
\begin{equation}\label{b3-real}
b_3 = - \sqrt{\frac{\pi}{2 K_- \e}} \frac{e^{K_+/\e}}{\sqrt{-P'(a_3)}} \cos \big(\frac{K_-}{\e} \big)(1+ \mo(\e^{1/2-\delta})).
\end{equation}
The matching also yields that $\R c_3 = 0$ implies $\R (i e^{-iK_-/\epsilon}) = \mo(\epsilon^{1/2-\delta})$. Thus,
\begin{equation}
\sin \big(\frac{K_-}{\epsilon}\big) = \mo(\epsilon^{1/2-\delta})
\end{equation}
and as a result
\begin{equation}
\frac{K_-}{\e} = n \pi + \mo \big( \e^{1/2-\delta} \big).\label{discrete-eps}
\end{equation}
for $n \in \mathbb{N}$. This equation for the parameter $\epsilon = 1/\sqrt{\lambda}$, where $\lambda$ is a large positive eigenvalue of the operator $L_S$, shows the essential property of the spectrum of $L_S$ to be purely discrete and, in addition, reveals the rate at which the eigenvalues tend to $+\infty$. Since the spectrum of $L_S$ is both unbounded above and below, we have to make a choice in terms of the enumeration of the eigenvalues $\lambda_n$. Equation \eqref{discrete-eps} shows that we can choose the enumeration such that
\begin{equation}
\sqrt{\lambda_n} = \frac{n \pi}{K_-} + \mo \big( n^{-1/2+\delta} \big), \quad n \in \mathbb{N} \label{discrete-lambda}
\end{equation}
holds.
With this and \eqref{b3-real} we finally obtain the coefficient $b_3$:
\begin{equation}\label{b3}
b_3 = (-1)^{n+1} \sqrt{\frac{\pi}{2 K_- \e}} \frac{e^{K_+/\e}}{\sqrt{-P'(a_3)}} (1+ \mo(\e^{1/2-\delta})).
\end{equation}

\begin{figure}[h!]
     \begin{center}
            \includegraphics[width=0.9\textwidth]{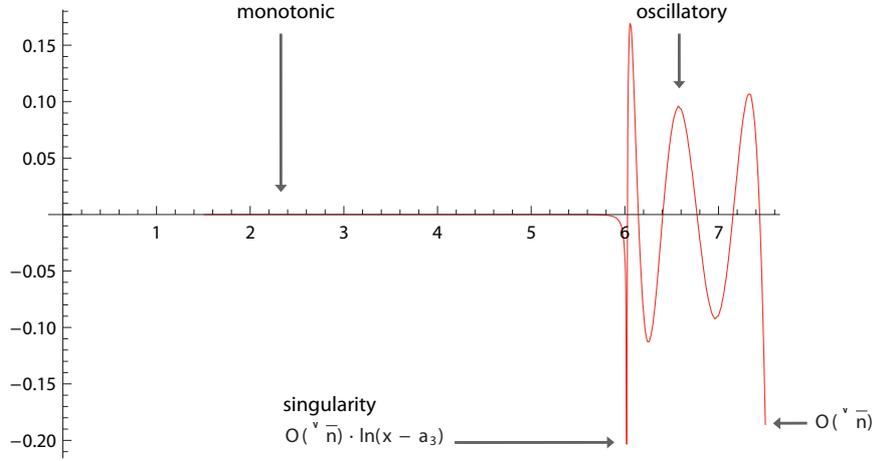}
     \end{center}
     \caption{The asymptotic behavior of the singular functions $f_n$ as $\sigma_n \to 0$.}
         \label{fig:asymp-sf}
\end{figure}

\subsection{Asymptotic behavior of the singular values accumulating at zero}
In the previous sections we have obtained the asymptotics of the functions $g$ with $\| g\|_{L^2([a_1,a_3])}=1$ and the property that $\chi_{[a_1,a_3]} g$ are the singular functions of $H_T$ for singular values close to zero. We found $g$ by defining it to be equal to the analytic function on $\bar{\mathbb{C}} \backslash [a_2,a_4]$ extended by  $g_{2,4}$ on $(a_2,a_4)$, see \eqref{g-on-C}.
These functions $g$ are the Hilbert transforms of functions $f$ that are supported on $[a_2,a_4]$. If we normalize $f$ as well, this reads $H f = \sigma g$, where $\sigma \ll 1$ is the corresponding singular value of $H_T$. Applying the Hilbert transform on both sides gives $H g = -\frac{1}{\sigma} f$. Thus, in order to estimate $\sigma$, we can proceed as follows: 
\begin{enumerate}
\item Estimate the jump discontinuity $g(a_3^+)-g(a_3^-)$ \\
\item Find the logarithmic term in $(H g)(a_3^+)$ \\
\item Determine the logarithmic term in $f(a_3^+)$ \\
\item Estimate $\sigma = - f(a_3^+)/(H g)(a_3^+)$
\end{enumerate}

Combining the asymptotics of the Bessel solutions \eqref{J0-t-small}, \eqref{Y0-t-small} with the representation \eqref{bessela3-both} of $g$ close to $a_3^+$, yields the following asymptotics for $g$:
\begin{align*}
g(x) &= b_3 \cdot \big[ J_0(2 \sqrt{t}) + \mo(\e^{1-2\delta/3}) \big] + c_3 \cdot \big[ Y_0(2 \sqrt{t}) + \mo(\e^{3/2-\delta/3}) \big]
\end{align*}
where $t = \lambda (a_3-x)/P'(a_3) \in [0,1)$. Using the relation \eqref{b3c3} between $b_3$ and $c_3$, we can further write this as 
\begin{equation*}
g(x)= -i c_3 \cdot \big[ J_0(2 \sqrt{t}) + \mo(\e^{1/2-\delta}) \big] + c_3 \cdot \big[ Y_0(2 \sqrt{t}) + \mo(\e^{3/2-\delta/3}) \big]
\end{equation*}
On the other hand, we know from the theory of Fuchs-Frobenius that close to $a_3^+$ a solution to $(L- \lambda) g = 0$ is of the form
 \begin{equation*}
g(x) =  \phi_1(x) + \phi_2(x) \ln |x-a_3|
\end{equation*}
where $\phi_1, \phi_2$ are analytic close to $a_3$. The requirement that $\R g (a_3^+)$ be bounded implies $\R \phi_2(a_3) = 0$. The analytic continuation of $g$ to a neighborhood of $a_3^-$ is given by
\begin{equation*}
g_{c}(x) =  \phi_{1}(x) + \phi_{2} (x) [ \ln |x-a_3|+i \pi].
\end{equation*}
According to \eqref{g24}, $g$ at $a_3^-$ is equal to $\R g_{c}(x)$. This determines the jump discontinuity of $g$ across $a_3$ to be $- i \pi \phi_2(a_3)$. Hence, using the asymptotics \eqref{asymp-Y0-small} of $Y_0$,  
the jump discontinuity of $g$ at $a_3$ is equal to
\begin{equation*}
g(a_3^+) - g(a_3^-) = - i \pi \phi_2(a_3) = -i c_3 = b_3 (1+ \mo(\e^{1/2-\delta})).
\end{equation*}

This allows to estimate the logarithmic term in $H g$ to be $- \frac{1}{\pi} b_3 (1+ \mo(\e^{1/2-\delta})) \ln|x-a_3|$ (see Section 8.2 in \cite{gakhov}). 

Next, we find $f$, such that supp $f = [a_2,a_4]$ and $L f = \lambda f$ with a WKB approximation which holds on the region $\Lambda_+$. On $(a_3,a_4)$, $f$ is oscillatory, so analogously to the procedure for $g$, we start with the WKB approximation on $[a_3+\mo(\e^{1+2 \delta}),a_4-\mo(\e^{1+2 \delta})]$ and require boundedness at $a_4^+$. This determines $f$ (similarly to \eqref{wkba1final} for $g$) up to a constant:
\begin{align*}
f(x) = \frac{1}{(-P(x))^{1/4}} \Big[ &\cos \Big( \frac{1}{\epsilon} \int_{x}^{a_4} \frac{dt}{\sqrt{-P(t)}} -\frac{\pi}{4} \Big) \cdot (1+ \mathcal{O}(\epsilon^{1/2-\delta})) \\
&+ \sin \Big( \frac{1}{\epsilon} \int_{x}^{a_4} \frac{dt}{\sqrt{-P(t)}} -\frac{\pi}{4} \Big) \cdot  \mathcal{O}(\epsilon^{1/2-\delta}) \Big]. 
\end{align*}
Before, we estimated $g$ at $a_3^+$ and required its real part to be bounded. Now, in the procedure for $f$, we are interested in estimating the \textit{unbounded} part of $f$ at $a_3^+$.
We make use of the relation
\begin{align*}
\int_{a_3}^x \frac{dt}{\sqrt{-P(t)}} = \int_{a_3}^{a_4} \frac{dt}{\sqrt{-P(t)}} + \int_{a_4}^x \frac{dt}{\sqrt{-P(t)}} = - \int_x^{a_4} \frac{dt}{\sqrt{-P(t)}} + K_-,
\end{align*}
that allows to rewrite $f$ on $[a_3+\mo(\e^{1+2 \delta}),a_4-\mo(\e^{1+2 \delta})]$:
\begin{align}
f(x) = \frac{1}{(-P(x))^{1/4}} \Big[& \cos \Big( - \frac{1}{\epsilon} \int_{a_3}^{x} \frac{dt}{\sqrt{-P(t)}} + \frac{K_-}{\e}-\frac{\pi}{4} \Big) \cdot (1+ \mathcal{O}(\epsilon^{1/2-\delta})) \nonumber \\
&+  \mathcal{O}(\epsilon^{1/2-\delta}) \sin \Big( - \frac{1}{\epsilon} \int_{a_3}^{x} \frac{dt}{\sqrt{-P(t)}} +  \frac{K_-}{\e} -\frac{\pi}{4} \Big) \Big]. 
\end{align}
Using \eqref{discrete-eps} and trigonometric identities, it then follows that
\begin{align}
f(x) = \frac{(-1)^n}{(-P(x))^{1/4}} \Big[& - \sin \Big( \frac{1}{\epsilon} \int_{a_3}^{x} \frac{dt}{\sqrt{-P(t)}} -\frac{\pi}{4} \Big) + \mathcal{O}(\epsilon^{1/2-\delta}) \label{wkb-f-a3}  \\
&-  \cos \Big( \frac{1}{\epsilon} \int_{a_3}^{x} \frac{dt}{\sqrt{-P(t)}} -\frac{\pi}{4} \Big) \cdot \mathcal{O}(\epsilon^{1/2-\delta}) \Big] \nonumber
\end{align}
on $[a_3+\mo(\e^{1+2 \delta}),a_4-\mo(\e^{1+2 \delta})]$. 

In a neighborhood of $a_3^+$, $f$ can be represented as a linear combination of the Bessel solutions \eqref{J0-t-large} and \eqref{Y0-t-large}. For constants $b'_3$, $c'_3$,
\begin{equation}
f(x) = b_3' \big[ J_0(2 \sqrt{t}) + t^{-1/4} \mo(\e^{1-2 \delta/3}) \big] + c_3' \big[ Y_0(2 \sqrt{t}) + t^{-1/4} \mo(\e^{1-2 \delta/3}) \big],
\end{equation}
where $t = \lambda (a_3-x)/P'(a_3)$ and $t \in [1,\mo(\epsilon^{-1-2\delta/3})]$. Using the asymptotics of the Bessel functions for $t \to +\infty$ \eqref{J0-t-large}, \eqref{Y0-t-large} to match the above with the WKB solution \eqref{wkb-f-a3} in their overlap region of validity (similarly as in Section \ref{subsec:bessel-at-a1}) gives
\begin{align}
b_3' &= (-1)^{n+1}\sqrt{\frac{\pi}{-\e P'(a_3)}} \cdot \mo(\e^{1/2-\delta}), \label{b3-prime}\\
c_3'  &= (-1)^{n+1}\sqrt{\frac{\pi}{-\e P'(a_3)}} \cdot (1+ \mo(\e^{1/2-\delta})). \label{c3-prime}
\end{align}
After normalization of $f$ (as it was done for $g$, see Appendix), we can use \eqref{asymp-Y0-small} to find the logarithmic term in $f(a_3^+)$ (up to a sign):
\begin{displaymath}
\frac{(-1)^{n+1}}{\pi} \sqrt{\frac{2 \pi}{-\e P'(a_3) K_-}}  \ln|x-a_3| (1+ \mo(\e^{1/2-\delta})).
\end{displaymath}
The sign of $f$ is then determined by $f = - \sigma H g$ and $\sigma >0$. This yields
\begin{align*}
(Hg)(a_3^+)/f(a_3^+) &= \frac{ \frac{(-1)^n}{\pi}  \sqrt{\frac{\pi}{2 K_- \e}} \frac{e^{K_+/\e}}{\sqrt{-P'(a_3)}}}{(-1)^{n+1}\sqrt{\frac{2 \pi}{-\e P'(a_3) K_-}} \cdot \frac{1}{\pi}} \cdot (1+ \mo(\e^{1/2-\delta}))  \\
&= - \frac{1}{2} e^{K_+/\e} (1+ \mo(\e^{1/2-\delta}))
\end{align*}
and
\begin{align}
\sigma &= 2 e^{-K_+/\e} (1+ \mo(\e^{1/2-\delta}))
\end{align}
\begin{theorem}\label{thm-sigma}
Let $\lambda_n$ be enumerated as in \eqref{discrete-lambda}. Then, the singular values $\sigma_n$ of $H_T$ that accumulate at zero, behave asymptotically like
\begin{equation}
\sigma_n = 2 e^{-n \pi K_+/K_-} (1+ \mo(n^{-1/2+\delta})),\ n\to\infty.
\end{equation}
\end{theorem}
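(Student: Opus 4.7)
The plan is to combine the two ingredients developed in Section \ref{sec:asymp-0}: the quantization condition \eqref{discrete-lambda} on the eigenvalues $\lambda_n$, and the asymptotic relation $\sigma = 2 e^{-K_+/\e}(1+\mo(\e^{1/2-\delta}))$ obtained from the four-step jump/logarithmic-term argument immediately preceding the theorem. The theorem itself is then essentially the substitution of the first relation into the second, with careful propagation of the error.

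I would first construct a candidate eigenfunction $g$ by the procedure of Section \ref{subsec:asymptotics-of-g}: start with the WKB ansatz \eqref{wkb1}--\eqref{wkb2} on the oscillatory interval $(a_1,a_2)$, match it at $a_1^+$ with a Bessel solution of the form $b_1 J_0(2\sqrt{t})$ to enforce boundedness at $a_1$, and analytically continue the resulting expression across $(a_2,a_3)$ via the upper half plane. The phase-shift identity \eqref{phaseshift1} produces the crucial exponential factor $e^{K_+/\e}$. At $a_3^+$ I would match the continued WKB expression to a general Bessel linear combination $b_3\hat{\psi}_1 + c_3\hat{\psi}_2$; imposing $\text{Re } c_3 = 0$ (equivalent to boundedness of $\text{Re } g(x+i0)$ at $a_3^+$) yields $\sin(K_-/\e) = \mo(\e^{1/2-\delta})$, which is the quantization \eqref{discrete-eps}, and hence \eqref{discrete-lambda}.

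Next I would construct $f$ symmetrically, starting with WKB on $(a_3,a_4)$, matching the bounded Bessel solution at $a_4^-$, and using \eqref{discrete-eps} to rewrite the phase at $a_3^+$ in terms of the same integer $n$. This fixes the Bessel coefficients \eqref{b3-prime}, \eqref{c3-prime} of $f$ near $a_3^+$. The singular value then emerges from $Hg = -f/\sigma$: by Plemelj--Sokhotski the jump of $g$ across $a_3$ equals $-i\pi\phi_2(a_3) = -ic_3 = b_3(1+\mo(\e^{1/2-\delta}))$, which determines the coefficient of the logarithmic singularity of $Hg$ at $a_3^+$; comparing with the logarithmic coefficient of $f$ read off from $c_3'$ and taking the ratio yields $\sigma = 2e^{-K_+/\e}(1+\mo(\e^{1/2-\delta}))$.

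Finally, substituting $1/\e = n\pi/K_- + \mo(n^{-1/2+\delta})$ into the exponential gives
\[
e^{-K_+/\e} = e^{-n\pi K_+/K_-}\cdot e^{-K_+\,\mo(n^{-1/2+\delta})} = e^{-n\pi K_+/K_-}\bigl(1+\mo(n^{-1/2+\delta})\bigr),
\]
from which the stated asymptotic follows immediately, since the multiplicative $(1+\mo(\e^{1/2-\delta}))=(1+\mo(n^{-1/2+\delta}))$ factors absorb into the same error term. The main obstacle is not the final substitution but rather the uniform control of the WKB error in the overlap region \eqref{overlap-at-a1}: the matching must simultaneously respect the Bessel approximation near the turning point and the WKB approximation on the interior, which forces the parameter choice \eqref{mu1mu2} and ultimately produces the $\mo(\e^{1/2-\delta})$ rate that controls the correction. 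Once the uniform WKB bounds of Theorem \ref{thmwkb} (borrowed from \cite{k-t}) are in hand, the remaining chain is essentially algebraic bookkeeping.
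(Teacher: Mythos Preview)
Your proposal is correct and follows essentially the same route as the paper: construct $g$ via WKB/Bessel matching on $(a_1,a_2)$ with boundedness at $a_1$, analytically continue through the upper half plane using \eqref{phaseshift1} to pick up $e^{K_+/\e}$, impose $\text{Re }c_3=0$ at $a_3^+$ to obtain the quantization \eqref{discrete-eps}, then construct $f$ on $(a_3,a_4)$ analogously, read off the jump of $g$ and the logarithmic coefficients via the Bessel constants $b_3,c_3'$, form the ratio $(Hg)/f$ to get $\sigma=2e^{-K_+/\e}(1+\mo(\e^{1/2-\delta}))$, and finally substitute \eqref{discrete-lambda}. The only step you leave implicit is the normalization $\|g\|_{L^2([a_1,a_3])}=\sqrt{K_-/2}(1+\mo(\e^{1/2-\delta}))$ from the Appendix, which the paper inserts before computing $b_3$ and is needed so that the ratio genuinely equals $-1/\sigma$; since you cite \eqref{b3-prime}--\eqref{c3-prime}, which are already post-normalization, this is not a gap in your outline.
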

This result shows the \textit{severe} ill-posedness of the underlying problem of reconstructing a function $f$ from $H_T f = g$ for given $g$: A subsequence of the singular values $\sigma_n$ of $H_T$ decays to zero, resulting in the unboundedness of the inverse of $H_T$. As a consequence, small perturbations in $g$ due to measurement noise will result in unreliable predictions for $f$. Unlike in cases of so-called \textit{mild} ill-posedness, where the singular values decay to zero at a polynomial rate, the singular values $\sigma_n$ of $H_T$ decay to zero exponentially, resulting in \textit{severe} ill-posedness.

\begin{remark}
The most natural way to find the asymptotics of $\sigma_n$ would be to estimate the jump discontinuity of the singular functions $\chi_{[a_1,a_3]} g$ and then use $H_T^* g = \sigma f$. However, the jump discontinuity of $\chi_{[a_1,a_3]} g$ at $a_3^-$ can only be estimated to be of the order $b_3 \cdot \mo(\e^{1-2\delta/3})$, where $b_3$ (see (\ref{b3})) contains the term $e^{K_+/\e}$. Thus, the coefficient in front of the logarithmic term in $H_T^* g$ will also be of the order $b_3 \cdot \mo(\e^{1-2\delta/3})$, which results in the useless estimate $\sigma = \mo(e^{K_+/\e} \cdot \e^{1-2\delta/3})$. Therefore, it was necessary to replace $H_T$ by the full Hilbert transform $H$ and to consider $H g = - \frac{1}{\sigma} f$ instead of $H_T^* g = \sigma f$  
to obtain the result of Theorem \ref{thm-sigma}.
\end{remark}

\section{Asymptotic analysis for the case of $\sigma_n \to 1$}\label{sigma-1}

The previous section described how to derive the asymptotic behavior of the singular values in the neighborhood of their accumulation point at zero.

Here we show how to easily obtain the asymptotic behavior around the second accumulation point equal to $1$ using a symmetry property that allows to exploit the analysis done for the first accumulation point. 

We define the operator $H_{T,c} := \mathp_{[a_2,a_4]} H \mathp_{([a_1,a_3])^c}$,  where $(\cdot)^c$ denotes the complement in $\mathbb{R}$ and $\mathp$ is the projection operator defined in Section \ref{sec:intro}. Without loss of generality we assume $a_1 < 0 < a_2 < a_3 < a_4$. Consider a singular function $f \in L^2([a_2,a_4])$ of $H_T$ with singular value $\sigma$. As it was shown in \cite{a-k}, the spectrum of $H_T^* H_T$ is bounded above by 1. Therefore we can define 
$\beta^2=1-\sigma^2$, and see that $f$ satisfies the eigenequation
\begin{equation}
f - \beta^2 f =  H_{T}^* \,  H_{T} f.
\end{equation}
On the other hand we have $H^* H = I$, where $H$ is the full Hilbert transform on the line. Hence $f$ also satisfies \begin{equation}
f  = H_T^* \,  H_{T} f + H_{T,c}^* \,  H_{T,c} f.
\end{equation}
Subtracting the two equations we obtain a new eigenequation for $f$, now with eigenvalue $\beta^2$:
\begin{equation}
\beta^2 f = H_{T,c}^* \,  H_{T,c} f. 
\end{equation}
We will relate this eigenequation to an eigenequation for a different truncated Hilbert problem, obtained by the transformation  $x \leftrightarrow 1/x$.
Define $\eta=1/x$ and the singular points $\eta_j=1/a_j, j=1,\dots,4$. These are ordered as $\eta_1 < 0 < \eta_4 < \eta_3 < \eta_2$. Furthermore, we define the function
${\bar f}(\eta) = \eta^{-1} f(\eta^{-1})$. Note that the support of $\bar f$ is $\eta_4 < \eta < \eta_2$. With these notations, we have, noting that $0 \notin (a_2,a_4)$,
\begin{eqnarray}
 x \left ( H_{T,c} f \right )(x) &=& x \, \frac{1}{\pi} \, \text{p.v.}\int_{a_2}^{a_4}  \frac{f(y)}{y-x} dy= \frac{1}{\pi} \, \text{p.v.}\int_{a_2}^{a_4}  \frac{y \, f(y)}{(1/x-1/y)} \frac{dy}{y^2} \label{trafo} \\
&=& - \frac{1}{\pi} \,  \text{p.v.}\int_{\eta_4}^{\eta_2} \, \frac{{\bar f}(\eta)}{\eta-\xi}  d \eta = - (\bar{H}_{T} {\bar f})(\xi) \ \mbox{  with  } \xi=1/x,  \nonumber 
\label{xHfx}
\end{eqnarray}
where we define the operator $\bar{H}_{T}: L^2([\eta_4,\eta_2]) \rightarrow L^2([\eta_1,\eta_3])$\footnote{In \eqref{trafo} we have assumed 
that the variable transformation in the principal value integrals can be handled in the same way as for ordinary integrals. For a proof of this property we refer to \cite{gakhov}, Section 3.5.
} to be:
\begin{equation}
(\bar{H}_{T} h)(\xi) = \frac{1}{\pi} \,  \text{p.v.} \int_{\eta_4}^{\eta_2}  \, \frac{h(\eta)}{\eta-\xi} d \eta.
\end{equation}
The range in $\xi$ is obtained from
\begin{equation}
x \in ([a_1,a_3])^c = (-\infty,a_1) \cup (a_3, \infty) \Rightarrow \xi \in (\eta_1,0) \cup (0, \eta_3)=(\eta_1,\eta_3)
\end{equation}
We now apply the adjoint transform, and calculate for $a_2 < z < a_4$:
\begin{eqnarray*}
z \, \left (  H_{T,c}^* \,  H_{T,c} f \right)(z) &=& \frac{z}{\pi} \left \{ \int_{-\infty}^{a_1 < 0} + \int^{\infty}_{a_3 > 0} \right \}  \, \frac{(H_{T,c} f )(x)}{z-x}dx \\
&=& \frac{1}{\pi \omega}  \left \{ \int_{\eta_1}^0 + \int_0^{\eta_3 } \right \}  \, \frac{(H_{T,c} f )(1/\xi)}{(1/\omega-1/\xi)} \frac{d \xi}{\xi^2} \\
&=& -\frac{1}{\pi} \int_{\eta_1}^{\eta_3}  \, \frac{(1/\xi) \, (H_{T,c} f )(1/\xi)}{(\omega-\xi)} d \xi \\
&=&  \frac{1}{\pi} \int_{\eta_1}^{\eta_3}  \, \frac{(\bar{H}_T{\bar f})(\xi)}{(\omega-\xi)} d \xi=\bar{H}_T^*  \bar{H}_T {\bar f}(\omega)  
\end{eqnarray*}
where $\omega=1/z$.  We conclude that $\bar{H}_T^*  \bar{H}_T {\bar f} = \beta^2 {\bar f}$, hence $\beta^2$ is an eigenvalue for the truncated Hilbert problem defined by
 $\eta_1 < \eta_4 < \eta_3 < \eta_2$. 
 
 The implication of this result for the asymptotic behavior of the singular values around the accumulation points 0 and 1 are as follows.
 Consider the case $\beta^2 \rightarrow 0$.
 From the previous section we know that the asymptotic behavior of these eigenvalues (which are the squares of the singular values of $\bar{H}_T$) is given by
 \begin{equation}
 \beta_n = 2 e^{- n \pi \bar{K}_+/\bar{K}_-} (1+ \mo(n^{-1/2+\delta})),
 \end{equation}
 with 
 \begin{eqnarray*}
 {\bar K}_+  &=&  \int_{\eta_4}^{\eta_3}  \left \{ (t-\eta_1)(t-\eta_2)(t-\eta_3)(t-\eta_4) \right \}^{-1/2}dt = (|a_1| a_2 a_3 a_4)^{1/2} \, K_-, \\
  {\bar K}_-  &=&  \int_{\eta_3}^{\eta_2}  \left \{ -(t-\eta_1)(t-\eta_2)(t-\eta_3)(t-\eta_4) \right \}^{-1/2} dt= (|a_1| a_2 a_3 a_4)^{1/2} \, K_+,
 \end{eqnarray*}
where the last equalities can be checked by substituting $t=1/y$ in the integrals.
 Using the previous result and recalling the definition $\beta^2=1-\sigma^2$, we obtain the asymptotic behavior in the neighborhood of $1$ of the singular values of the original problem defined by $a_1,a_2,a_3,a_4$:
\begin{theorem}\label{thm-sigma-1}
The singular values $\sigma_{-n}$, $n \in \mathbb{N}$, accumulating at $1$ have the following asymptotic behavior
 \begin{equation}
 \sigma_{-n} = \sqrt{1-\beta^2_n} =\sqrt{ 1- 4 e^{-2 n \pi K_-/K_+} (1+ \mo(n^{-1/2+\delta}))}.
 \label{slope1}
\end{equation}
\end{theorem}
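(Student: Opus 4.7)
The plan is to reduce Theorem \ref{thm-sigma-1} to Theorem \ref{thm-sigma} applied to the transformed operator $\bar{H}_T$. The derivation immediately preceding the statement already shows that every singular function $f$ of $H_T$ with singular value $\sigma$ produces, via $\bar{f}(\eta)=\eta^{-1}f(\eta^{-1})$, an eigenfunction of $\bar{H}_T^* \bar{H}_T$ on $L^2([\eta_4,\eta_2])$ with eigenvalue $\beta^2 = 1-\sigma^2$. Because the four points $\eta_1 < \eta_4 < \eta_3 < \eta_2$ exhibit the same ordering pattern as $a_1 < a_2 < a_3 < a_4$, the operator $\bar{H}_T$ is itself a truncated Hilbert transform with overlap in the sense of \eqref{tht}, and the regime $\sigma_n\to 1$ corresponds to the regime $\beta_n \to 0$ for $\bar{H}_T$. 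Applying Theorem \ref{thm-sigma} directly to $\bar{H}_T$ therefore yields
\begin{equation*}
\beta_n = 2\, e^{-n\pi \bar{K}_+/\bar{K}_-}(1+\mo(n^{-1/2+\delta})),
\end{equation*}
where $\bar{K}_\pm$ are the integrals displayed above the theorem, formed from the configuration $\eta_1<\eta_4<\eta_3<\eta_2$.

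The second step is to translate $\bar{K}_\pm$ back into the original $K_\pm$. I would carry this out by the substitution $t=1/y$ in each integral. Using
\begin{equation*}
\prod_{j=1}^{4}(t-\eta_j) = \prod_{j=1}^{4}\frac{a_j-y}{y\, a_j} = \frac{P(y)}{y^{4}\, a_1 a_2 a_3 a_4},
\end{equation*}
together with $dt=-dy/y^2$ and the sign of $a_1 a_2 a_3 a_4 = -|a_1|a_2 a_3 a_4$, a short bookkeeping calculation on the relevant intervals (noting that on $(\eta_4,\eta_3)$ the product is positive while on $(\eta_3,\eta_2)$ it is negative, consistently with the defining integrals) gives
\begin{equation*}
\bar{K}_+ = (|a_1|a_2 a_3 a_4)^{1/2} K_-,\qquad \bar{K}_- = (|a_1|a_2 a_3 a_4)^{1/2} K_+,
\end{equation*}
so that the common factor cancels in the ratio and $\bar{K}_+/\bar{K}_- = K_-/K_+$. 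Substituting into the expression for $\beta_n$ and using the relation $\sigma_{-n}=\sqrt{1-\beta_n^2}$ from $\beta^2 = 1-\sigma^2$ then yields \eqref{slope1}.

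The main obstacle is not the algebra but the justification that Theorem \ref{thm-sigma} genuinely transfers to $\bar{H}_T$: one must verify that the map $f\mapsto \bar{f}$ is an $L^2$-isomorphism between the relevant function spaces, that the change of variables is compatible with the principal-value prescription (handled via the reference to \cite{gakhov} in the footnote of the excerpt), and that the enumeration conventions line up, so that the index $n\to+\infty$ for $\beta_n\to 0$ matches the convention $n\to-\infty$ for $\sigma_n\to 1$. All three points are direct consequences of the identities $H^*H=I$ and $f = H_T^*H_Tf + H_{T,c}^*H_{T,c}f$ already used in the preamble to the theorem; once they are in place, Theorem \ref{thm-sigma-1} follows as a corollary of Theorem \ref{thm-sigma}.
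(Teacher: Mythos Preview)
Your proposal is correct and follows exactly the paper's own route: the text preceding Theorem~\ref{thm-sigma-1} carries out precisely the reduction you describe---the $x\mapsto 1/x$ change of variables converts the $\sigma\to 1$ problem for $H_T$ into the $\beta\to 0$ problem for $\bar H_T$, Theorem~\ref{thm-sigma} is applied to $\bar H_T$, and the substitution $t=1/y$ gives $\bar K_+/\bar K_- = K_-/K_+$. The extra care you flag (the $L^2$-isomorphism, compatibility with the principal value, and index bookkeeping) is reasonable but not elaborated further in the paper either; your write-up is essentially a faithful summary of Section~\ref{sigma-1}.
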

\section{Comparison of numerics and asymptotics}\label{sec:numerics}
In the previous sections, the asymptotic behavior of the singular value decomposition has been derived. Although these asymptotics only hold in the limit $n \to \infty$, we would like to illustrate that they also yield a good approximation of the SVD for small $n$. For this, we compare the SVD from the asymptotic formulas with the SVD of a discretized version of the operator $H_T$.

For our example, we choose the points $a_i$ to be $a_1=0,  a_2 = 3, a_3 = 6, a_4 = 12$  
and the discretization $\mathbf{H_T}$ of $H_T$ to be a uniform sampling with $601$ partition points in the interval $[0,6]$ and $901$ points in $[3,12]$. Let vectors $X$ and $Y$ denote the partition points of $[0,6]$ and $[3,12]$ respectively. To overcome the singularity of the Hilbert kernel the vector $X$ is shifted by half of the sample size. The $i$-th components of the two vectors $X$ and $Y$ are given by $X_i = \frac{1}{100} (i + \frac{1}{2})$ and $Y_i = 3 + \frac{1}{100} i$; $H_T$ is then discretized as $(\mathbf{H_T)_{i,j}} = (1/\pi) (X_i-Y_j)$, $\mathbf{i}=0, \dots,600$, $\mathbf{j} = 0, \dots, 900$. 

Let $\mathbf{s_i}$, $\mathbf{i}=0, \dots, 313$ denote the non-zero singular values of the matrix $\mathbf{H_T}$. Table \ref{tab:sv} shows a list of a few singular values indicating that for $\mathbf{i} = 0, \dots, 300$ the values $\mathbf{s_i}$ are close to $1$, whereas they are close to $0$ for $\mathbf{i} = 302, \dots, 313$. Although in theory, $0$ itself is not a singular value of $H_T$ but the singular values only decay to $0$, they do this at an exponential rate. In practice, this leads to matrix realizations of $H_T$ which effectively have a large nullspace.

\begin{table}[h]
\begin{center}
    \begin{tabular}{ | l | l | l | l | l | l | l |}
    \hline
    $\mathbf{i}$ & $300$ & $301$ & $302$ & $303$ & $304$ & $305$ \\ \hline
    $\mathbf{s_i}$ & $0.9942962$ & $0.6630176$ & $0.0397114$ & $1.1321 \cdot 10^{-3}$ & $2.9846 \cdot 10^{-5}$  & $7.7106 \cdot 10^{-7}$\\ 
\hline    

    \end{tabular}
\
\caption{The singular values of $\mathbf{H_T}$ in the transition from $1$ to $0$.}\label{tab:sv}
\end{center}
\end{table}

We compare the singular values $\mathbf{s_i}$, $\mathbf{i} = 302, \dots, 313$ of $\mathbf{H_T}$ with the asymptotic behavior of the singular values $\sigma_n$ of $H_T$ for $\sigma_n \to 0$ (see Theorem \ref{thm-sigma}). Here, we neglect the error terms, i.e. we consider the asymptotic form $\sigma_n \approx 2 e^{-n \pi K_+/K_-}$, for $n=1, \dots, 12$. Finding the set of indices for $n$ that match the chosen indices $\mathbf{i}=302, \dots, 313$ is done by hand. Figure \ref{fig:sv-comp-0} shows a logarithmic plot of this comparison. While Theorem \ref{thm-sigma} only guarantees that $2 e^{-n \pi K_+/K_-}$ is a good approximation of the singular values $\sigma_n$ for $n \to \infty$, our example demonstrates good alignment already for $n=1$.

\begin{figure}[h!]
     \begin{center}
            \includegraphics[width=0.7\textwidth]{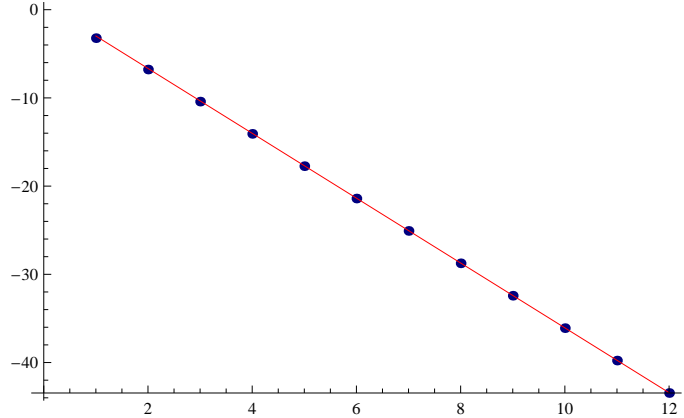}
     \end{center}
     \caption{Logarithmic plot of the asymptotic (red line) and numerical values (blue dots) of the singular values tending to zero.}
         \label{fig:sv-comp-0}
\end{figure}

Similarly, we perform a comparison of the singular values $\mathbf{s_i}$, $\mathbf{i} = 293, \dots, 300$ of $\mathbf{H_T}$ with the result from Theorem \ref{thm-sigma-1} on the asymptotic behavior of the singular values $\sigma_{-n} \to 1$. Again, the error terms are neglected, so that $\sigma_{-n} \approx \sqrt{ 1- 4 e^{-2 n \pi K_-/K_+}}$ for $n=1, \dots, 8$ is considered instead. A plot comparing $\log(1-\mathbf{s_i}^2)$ with $\log(4 e^{-2 n \pi K_-/K_+})$ is shown in Figure \ref{fig:sv-comp-1}, illustrating the good alignment for small values of $n$.

\begin{figure}[h!]
     \begin{center}
            \includegraphics[width=0.7\textwidth]{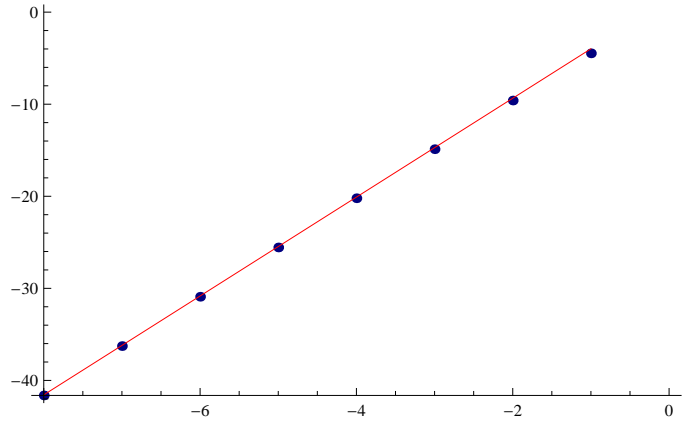}
     \end{center}
     \caption{Logarithmic plot of the asymptotic (red line) and numerical values (blue dots) of $1-\sigma_{-n}^2$ for the singular values $\sigma_{-n}$ tending to $1$.}
         \label{fig:sv-comp-1}
\end{figure}

To conclude the numerical illustration, we compare the singular vector $\mathbf{g_{307}}$ of $\mathbf{H_T}$ with the asymptotic behavior obtained for the singular function $g_6$ of $H_T$.
For this again, only the leading terms in the asymptotic expansions are taken into consideration. To define the approximation to $g_6$ on the entire interval $[0,6]$, we first consider the plots of the WKB and Bessel approximations close to a point $a_i$. Then, the point of transition from the Bessel to the WKB approximation is set by hand at a point of good alignment between the two functions. Figure \ref{fig:sf-comp} shows the approximation to $g_6$ obtained from the asymptotics compared to the singular vector $\mathbf{g_{307}}$. In Figure \ref{fig:sf-comp-log}, a logarithmic plot indicates that the asymptotic form is a very good approximation to $\mathbf{g_{307}}$ also on the region where it decays, i.e. on $[3,6]$.

\begin{figure}[h!]
     \begin{center}
            \includegraphics[width=0.7\textwidth]{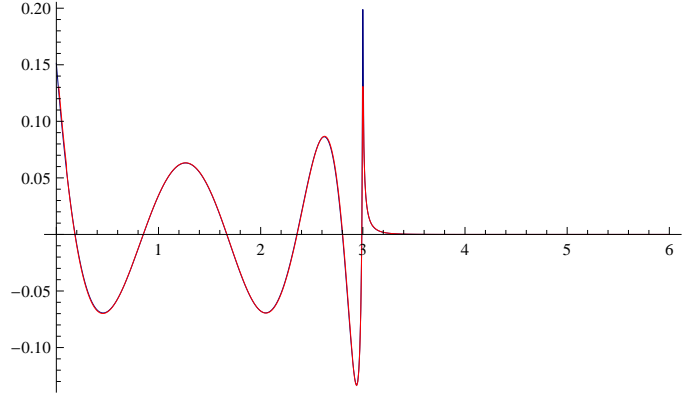}
     \end{center}
     \caption{The singular vector $\mathbf{g_{307}}$ (blue) of $\mathbf{H_T}$ compared with the asymptotics for the singular function $g_6$ (red) of $H_T$. Their good alignment makes them hardly distinguishable.}
         \label{fig:sf-comp}
\end{figure}

\begin{figure}[h!]
     \begin{center}
            \includegraphics[width=0.7\textwidth]{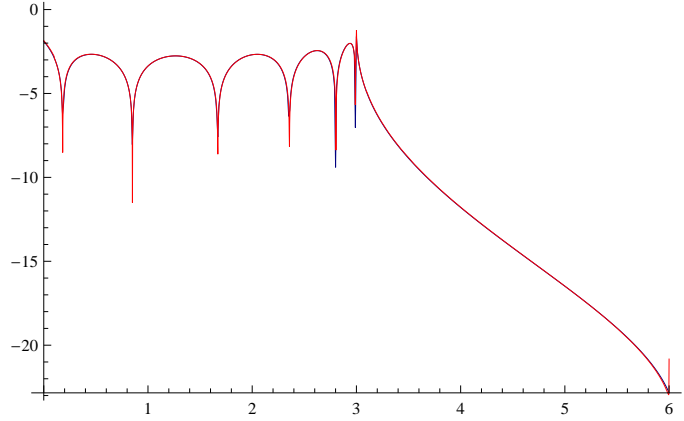}
     \end{center}
     \caption{A logarithmic plot of the comparison in Figure \ref{fig:sf-comp}. This shows very accurate alignment also in the region $[3,6]$ where the functions decay rapidly.}
         \label{fig:sf-comp-log}
\end{figure}

\subsection*{Acknowledgements} We would like to thank Ingrid Daubechies for her valuable comments. RA was supported by a fellowship of the Research Foundation Flanders (FWO) and AK was supported in part by NSF grants DMS-1115615 and DMS-1211164.

\section{Appendix: Normalization of $g$ on $(a_1,a_3)$}\label{norm}
\begin{lemma}
Let $g$ be the solution to $(L-\lambda) \phi =0$ derived in Section \ref{subsec:asymptotics-of-g}. Then,
\begin{equation}\label{normg}
\| g\|_{L^2([a_1,a_3])} = \sqrt{\frac{K_-}{2}} (1+ \mo(\e^{1/2-\delta})).
\end{equation} 
\end{lemma}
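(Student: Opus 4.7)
The plan is to split $\|g\|_{L^2([a_1,a_3])}^2 = I_1 + I_2$, with $I_1$ the integral over $(a_1,a_2)$ and $I_2$ the integral over $(a_2,a_3)$, show $I_1 = K_-/2 + \mo(\e^{1/2-\delta})$ and $I_2 = \mo(\e)$, and take a square root using $\sqrt{K_-/2+\mo(\e^{1/2-\delta})}=\sqrt{K_-/2}\bigl(1+\mo(\e^{1/2-\delta})\bigr)$.

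For $I_1$, I insert the WKB expression \eqref{wkba1final} and square. The leading contribution is
\begin{equation*}
\int_{a_1}^{a_2}\frac{\cos^2(\phi(x))}{\sqrt{-P(x)}}\,dx, \qquad \phi(x) := \frac{1}{\e}\int_{a_1}^{x}\frac{dt}{\sqrt{-P(t)}}-\frac{\pi}{4}.
\end{equation*}
Writing $\cos^2\phi=\tfrac{1}{2}+\tfrac{1}{2}\cos 2\phi$, the constant half integrates to $K_-/2$. The oscillating half is $\mo(\e)$: the substitution $\psi=\phi(x)$, under which $dx/\sqrt{-P(x)}=\e\,d\psi$, reduces it to $\tfrac{\e}{2}\int \cos(2\psi)\,d\psi$, an $\mo(\e)$ boundary term. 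The cross and $\sin^2$ contributions from \eqref{wkba1final} carry explicit $\mo(\e^{1/2-\delta})$ prefactors, hence contribute at most $\mo(\e^{1/2-\delta})$ to $I_1$. The Bessel regions of width $\mo(\e^{1-2\delta/3})$ around $a_1$ and $a_2$, where WKB is invalid, are handled directly from the Bessel asymptotics of Section~\ref{bessel-validity} and the coefficient \eqref{b1}: with $|b_1|^2\sim 1/\e$, $dx=\mo(\e^2)\,dt$, and $\int_0^T J_0(2\sqrt{t})^2\,dt\sim \sqrt{T}/\pi$ for large $T$, the contribution at $a_1$ is $\mo(\e^{1/2-\delta/3})$, and an analogous estimate covers $a_2$.

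For $I_2$, the discreteness condition \eqref{discrete-eps} gives $\sin(K_-/\e)=\mo(\e^{1/2-\delta})$ and $\cos(K_-/\e)=(-1)^n\bigl(1+\mo(\e^{1-2\delta})\bigr)$, so the real part of the analytically continued WKB representation of $g$ on $(a_2,a_3)$ is dominated by the decaying mode
\begin{equation*}
g_{2,4}(x)=\frac{(-1)^n}{2\,P(x)^{1/4}}\,e^{-\tilde K(x)/\e}\bigl(1+\mo(\e^{1/2-\delta})\bigr),\qquad \tilde K(x):=\int_{a_2}^{x}\frac{dt}{\sqrt{P(t)}}.
\end{equation*}
Squaring and using $u=\tilde K(x)/\e$, for which $dx/\sqrt{P(x)}=\e\,du$, yields
\begin{equation*}
\int_{a_2}^{a_3}\frac{e^{-2\tilde K(x)/\e}}{4\sqrt{P(x)}}\,dx=\frac{\e}{8}\bigl(1-e^{-2K_+/\e}\bigr)=\mo(\e),
\end{equation*}
and the narrow Bessel regions at $a_2^+$ and $a_3^-$ contribute at the same order.

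The main obstacle I anticipate is justifying that the putative growing mode $\sin(K_-/\e)\,e^{\tilde K(x)/\e}/P(x)^{1/4}$ on $(a_2,a_3)$ does not contaminate $I_2$: its coefficient is only bounded by $\mo(\e^{1/2-\delta})$, and multiplication by $e^{K_+/\e}$ as $x\to a_3^-$ would a priori produce an exponentially large term. This is ruled out by using the Bessel representation of $g$ in a neighborhood of $a_3^-$, where the true $g$ is bounded by construction, and by restricting the WKB estimate of the growing mode to the interior interval on which $\tilde K(x)$ stays bounded away from $K_+$.
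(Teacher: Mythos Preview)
Your treatment of $I_1$ is essentially the paper's: split off the Bessel collars, integrate the WKB $\cos^2$ via the half-angle identity, and check that the oscillatory remainder is $\mo(\e)$ while the collars contribute $\mo(\e^{1/2+\delta})$. That part is fine.

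The gap is in $I_2$. The obstacle you flag is real, and your proposed resolution does not close it. On $(a_2,a_3)$ the analytically continued WKB representation gives, after taking real parts,
\[
g_{2,4}(x)=\frac{1}{2P(x)^{1/4}}\Bigl[(\cos(K_-/\e)+\mo(\e^{1/2-\delta}))\,e^{-\tilde K(x)/\e}+(\sin(K_-/\e)+\mo(\e^{1/2-\delta}))\,e^{\tilde K(x)/\e}\Bigr].
\]
The discreteness relation \eqref{discrete-eps} only tells you $\sin(K_-/\e)=\mo(\e^{1/2-\delta})$; combined with the WKB error (also $\mo(\e^{1/2-\delta})$, uniformly on $\Lambda_-$), the coefficient of the growing mode is controlled \emph{only} to order $\e^{1/2-\delta}$. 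Squaring and integrating that mode over any fixed subinterval of $(a_2,a_3)$ produces a factor $e^{2\tilde K_{\max}/\e}$, which is exponentially large even well away from $a_3$; restricting to an ``interior interval on which $\tilde K(x)$ stays bounded away from $K_+$'' does not help, since $e^{2c/\e}$ for any fixed $c>0$ already swamps every power of $\e$. Your fallback---a Bessel representation near $a_3^-$---is precisely what the paper says is unavailable: the Bessel asymptotics of Section~\ref{bessel-validity} (and the results of \cite{k-t} they rest on) are derived only on the oscillatory intervals $(a_1,a_2)\cup(a_3,a_4)$, not on $(a_2,a_3)$. Knowing abstractly that $g$ is bounded at $a_3^-$ gives no $\e$-uniform pointwise bound there.

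The paper sidesteps this entirely. For $\|g_n\|_{L^2([a_2,a_3])}$ it never uses WKB on $(a_2,a_3)$; instead it expands $\chi_{[a_2,a_3]}g_n$ in the orthonormal eigenbasis $\{\bar g_k\}$ of the Sturm--Liouville problem $L\bar g_k=\bar\lambda_k\bar g_k$ on $(a_2,a_3)$ with bounded endpoint conditions. Two integrations by parts give
\[
c_{n,k}=\langle g_n,\bar g_k\rangle=\frac{C\,\phi_{2,n}(a_2)\,\bar g_k(a_2)}{\lambda_n-\bar\lambda_k},
\]
where $\phi_{2,n}(a_2)$ is the coefficient of the logarithmic part of $g_n$ at $a_2^+$. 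Since $\phi_{2,n}(a_2)=\mo(\sqrt n)$, $\bar g_k(a_2)=\mo(\sqrt k)$, and $\lambda_n\sim +n^2$ while $\bar\lambda_k\sim -k^2$, one gets $\sum_k c_{n,k}^2=\mo(n^{-1})=\mo(\e)$ directly, with no pointwise control on $(a_2,a_3)$ needed. This spectral argument is the missing ingredient; the WKB route you propose cannot, with the error bounds available, separate the true (exponentially small) growing-mode coefficient from the $\mo(\e^{1/2-\delta})$ WKB uncertainty.
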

\begin{proof}
We want to determine $\int_{a_1}^{a_3} g^2(x) dx$. The main contribution to this integral comes from the WKB solution \eqref{wkba1final} on $[a_1+\mo(\e^{1+2\delta}),a_2-\mo(\e^{1+2\delta})]$. We use the abbreviation $\e_\delta := \e^{1+2\delta}$ and derive
\begin{align*}
\int_{a_1+\mo(\e_\delta)}^{a_2-\mo(\e_\delta)} &g^2(x) dx= \\
&= \int_{a_1+\mo(\e_\delta)}^{a_2-\mo(\e_\delta)} \frac{1}{\sqrt{-P(x)}} \Big[ \cos^2 \Big( \frac{1}{\epsilon} \int_{a_1}^x \frac{dt}{\sqrt{-P(t)}} - \frac{\pi}{4}\Big) +\mo(\e^{1/2-\delta}) \Big] dx \\
&= \int_{a_1+\mo(\e_\delta)}^{a_2-\mo(\e_\delta)} \frac{1}{\sqrt{-P(x)}} \Big[ \frac{1}{2}\cos \Big( \frac{2}{\epsilon} \int_{a_1}^x \frac{dt}{\sqrt{-P(t)}} - \frac{\pi}{4}\Big) +\frac{1}{2}+\mo(\e^{1/2-\delta}) \Big] dx. 
\end{align*}
The first summand in the integral simplifies to 
\begin{align*}
\frac{1}{2} \int_{a_1+\mo(\e_\delta)}^{a_2-\mo(\e_\delta)}  \frac{1}{\sqrt{-P(x)}} & \cos \Big( \frac{2}{\epsilon} \int_{a_1}^x \frac{dt}{\sqrt{-P(t)}} - \frac{\pi}{4}\Big) dx = \\
&= \frac{\e}{4} \sin \Big( \frac{2}{\epsilon} \int_{a_1}^x \frac{dt}{\sqrt{-P(t)}} - \frac{\pi}{4}\Big) \Big\vert_{a_1+\mo(\e_\delta)}^{a_2-\mo(\e_\delta)} \\
&= \mo(\e).
\end{align*}
With that we obtain
\begin{align*}
\int_{a_1+\mo(\e_\delta)}^{a_2-\mo(\e_\delta)} g^2(x) dx &= \mo(\e) + \frac{1}{2} \Big(1+\mo(\e^{1/2-\delta}) \Big) \int_{a_1+\mo(\e_\delta)}^{a_2-\mo(\e_\delta)} \frac{1}{\sqrt{-P(x)}} dx \\
&= \Big(\frac{1}{2}+\mo(\e^{1/2-\delta}) \Big) \int_{a_1+\mo(\e_\delta)}^{a_2-\mo(\e_\delta)} \frac{dx}{\sqrt{-P(x)}}.
\end{align*}
With a Taylor expansion of $1/\sqrt{-P(x)}$, we find that
\begin{align*}
\int_{a_1}^{a_1+\mo(\e_\delta)} \frac{dx}{\sqrt{-P(x)}} &= \frac{1}{\sqrt{-P'(a_1)}} \int_{a_1}^{a_1+\mo(\e_\delta)} \frac{1+\mo(x-a_1)}{\sqrt{x-a_1}} dx =\mo(\e^{1/2+\delta}).
\end{align*}
Similarly,
\begin{align*}
\int_{a_2-\mo(\e_\delta)}^{a_2} \frac{dx}{\sqrt{-P(x)}} &=\mo(\e^{1/2+\delta})
\end{align*}
and thus
\begin{align}
\int_{a_1+\mo(\e_\delta)}^{a_2-\mo(\e_\delta)} g^2(x) dx &= \Big(\frac{1}{2}+\mo(\e^{1/2-\delta}) \Big) \big(K_- + \mo(\e^{1/2+\delta})\big)  \label{norm-g-wkb-a1a2} \\
&= \frac{K_-}{2} \Big(1+\mo(\e^{1/2-\delta}) \Big). \nonumber
\end{align}
Let $t = \lambda(a_1-x)/P'(a_1)$. We consider $g$ in a neighborhood of $a_1^+$, where it can be represented by $g(x) = b_1 \cdot \hat{\psi}_1(x-a_1)$ for $\hat{\psi}_1$ as in \eqref{J0-t-small} for $t \in [0,1)$ and as in \eqref{J0-t-large} for $t \in [1,\mo(\e^{2\delta-1}))]$. Using our previous estimate on the coefficient $b_1$ in \eqref{b1} and a change of variables, we can write
\begin{align}
\int_{a_1}^{a_1+ \mo(\e_\delta)} g^2(x) dx =& b_1^2 \cdot \big\{ \int_0^1 [J_0(2\sqrt{t}) + \mo(\e^{1-2\delta/3})]^2 (-P'(a_1) \e^2) dt \nonumber \\
& + \int_1^{\mo(\e^{2\delta-1})} [J_0(2\sqrt{t}) + t^{-1/4} \cdot \mo(\e^{1-2\delta/3})]^2  (-P'(a_1) \e^2) dt \big\} \nonumber \\
=& \mo(\e) \cdot \big\{ \int_0^{\mo(\e^{2\delta-1})} J_0^2(2\sqrt{t}) dt + \mo(\e^{1-2\delta/3}) + \nonumber \\
&+\mo(\e^{1-2\delta/3} \cdot \e^{3(2\delta-1)/4}) + \mo(\e^{2-4\delta/3} \cdot \e^{\delta-1/2}) \big\} \nonumber \\
=&  \mo(\e) \cdot \big\{ \int_0^{\mo(\e^{2\delta-1})} J_0^2(2\sqrt{t}) dt + \mo(\e^{1/4+5\delta/6})\big\}, \label{norm-g-a1}
\end{align}
where we have used the boundedness of $J_0$ to simplify the error terms. The asymptotic behavior \eqref{asymp-J0-small}, \eqref{asymp-J0-large} of $J_0$ implies that for some constant $c$, $|J_0(u)| \leq \frac{c}{\sqrt{u}}$, for positive $u$. With this we obtain
\begin{equation*}
\int_0^{\mo(\e^{2\delta-1})} J_0^2(2\sqrt{t}) dt \leq  \frac{c^2}{2} \int_0^{\mo(\e^{2\delta-1})} \frac{1}{\sqrt{t}} dt = \mo(\e^{\delta-1/2})
\end{equation*}
and hence
\begin{align}
\int_{a_1}^{a_1+ \mo(\e_\delta)} g^2(x) dx &= \mo(\e^{1/2+\delta}). \label{norm-g-a1-final}
\end{align}
The part of the $L^2$-norm of $g$ in the region at $a_2^-$ can be found in a similar fashion. By matching the WKB and Bessel solutions at $a_2^-$ one can find that $b_2= \mo(\e^{-\delta})$ and $c_2= \mo(\e^{-1/2})$ in
\begin{equation*}
g(x) = b_2 \hat{\psi}_1(x-a_2) + c_2 \hat{\psi}_2(x-a_2)
\end{equation*} 
for $\hat{\psi}_1$ and $\hat{\psi}_2$ as in \eqref{J0-t-small}, \eqref{J0-t-large} and \eqref{Y0-t-small}, \eqref{Y0-t-large}, respectively.
This can also be seen from \eqref{b3-prime}, \eqref{c3-prime}, since the asymptotic behavior of $g$ at $a_2^-$ can be compared to the one of $f$ at $a_3^+$. Replacing $b_1$ by $b_2$ in \eqref{norm-g-a1}, we obtain similarly to \eqref{norm-g-a1-final}
\begin{equation*}
\int_{0}^{\mo(\epsilon_{\delta})} b_2^2 \hat{\psi}_1^2(x) dx = \mo(\e^{3/2-\delta}).
\end{equation*}
This yields
\begin{align*}
&\int_{a_2-\mo(\e_\delta)}^{a_2} g^2(x) dx = \mo(\e^{3/2-\delta}) + \\
& \ \ + b_2 c_2 P'(a_2) \e^2 \big\{ \int_0^1 (J_0(2\sqrt{t}) + \mo(\e^{3/2-\delta/3})) (Y_0(2\sqrt{t}) + \mo(\e^{3/2-\delta/3}))  dt  \\
& \ \ + \int_1^{\mo(\e^{2\delta-1})} (J_0(2\sqrt{t}) + t^{-1/4} \cdot \mo(\e^{1-2\delta/3})) (Y_0(2\sqrt{t}) + t^{-1/4} \cdot \mo(\e^{1-2\delta/3})) dt \big\} \\
& \ \ + c_2^2 P'(a_2) \e^2 \cdot  \big\{ \int_0^1 (Y_0(2\sqrt{t}) + \mo(\e^{3/2-\delta/3}))^2 dt   \\
& \ \ + \int_1^{\mo(\e^{2\delta-1})} (Y_0(2\sqrt{t}) + t^{-1/4} \cdot \mo(\e^{1-2\delta/3}))^2 dt \big\}.
\end{align*}
The asymptotics of $b_2$ and $c_2$ together with the boundedness of $J_0$ allow to simplify the above expression to
\begin{align*}
&\int_{a_2-\mo(\e_\delta)}^{a_2} g^2(x) dx =  \mo(\e^{3/2-\delta}) +\mo(\e^{3/2-\delta})\big\{ \int_0^1 \left|Y_0(2\sqrt{t}) + \mo(\e^{3/2-\delta/3})\right| dt \\
& \ \ + \int_1^{\mo(\e^{2\delta-1})} \left|Y_0(2\sqrt{t}) + t^{-1/4} \cdot \mo(\e^{1-2\delta/3})\right| dt \big\} \\
& \ \ +  \mo(\e) \cdot \big\{ \int_0^{\mo(\e^{2\delta-1})} Y_0(2\sqrt{t})^2 dt +  \mo(\e^{3/2-\delta/3}) \int_0^1 \left| Y_0(2\sqrt{t}) \right| dt \\
& \ \ +  \mo(\e^{3-2\delta/3}) + \mo(\e^{1-2\delta/3}) \int_1^{\mo(2\delta-1)} \left| Y_0(2\sqrt{t}) t^{-1/4} \right| dt + \mo(\e^{1/4+5\delta/6}) \big\} \\
\ \ =& \mo(\e^{3/2-\delta})+\mo(\e^{3/2-\delta})\big\{ \int_0^{\mo(\e^{2\delta-1})} \left|Y_0(2\sqrt{t}) \right| dt + \mo(\e^{1/4+5\delta/6}) \big\} \\
& \ \ +  \mo(\e) \cdot \big\{ \int_0^{\mo(\e^{2\delta-1})} Y_0(2\sqrt{t})^2 dt +  \mo(\e^{3/2-\delta/3}) \int_0^1 \left| Y_0(2\sqrt{t}) \right| dt \\
& \ \ + \mo(\e^{1-2\delta/3}) \int_1^{\mo(2\delta-1)} \left| Y_0(2\sqrt{t}) t^{-1/4} \right| dt + \mo(\e^{1/4+5\delta/6}) \big\}.
\end{align*}
In view of \eqref{asymp-Y0-small} and \eqref{asymp-Y0-large}, there exists a constant $c$ such that $|Y_0(u)| \leq \frac{c}{\sqrt{u}}$ for positive $u$. Thus, we obtain
\begin{align*}
\int_0^{\mo(\e^{2\delta-1})} Y_0(2\sqrt{t})^2 dt &= \mo(\e^{\delta-1/2}), \\
\int_0^{\mo(\e^{2\delta-1})} \left|Y_0(2\sqrt{t})\right| dt &= \mo(\e^{3\delta/2-3/4}), \\
 \int_1^{\mo(2\delta-1)} \left| Y_0(2\sqrt{t}) t^{-1/4} \right| dt &= \mo(\e^{1/2-\delta/3}),
\end{align*}
and hence
\begin{align}
\int_{a_2-\mo(\e_\delta)}^{a_2} g^2(x) dx =& \mo(\e^{1/2+\delta}).\label{norm-g-a2}
\end{align}

The last missing piece is the norm of $g$ on $(a_2,a_3)$, i.e. on the region where it is monotonic. Here, we cannot follow the same procedure as before because the results in Section \ref{bessel-validity} and the corresponding results in \cite{k-t}, were only obtained on the regions where the solution oscillates. 

Instead, we will estimate $\| g \|_{L^2([a_2,a_3])}$ similarly to the derivation in Appendix C, \cite{k-t}. Let $\{ \bar{\lambda}_k; \bar{g}_k \}_{k \in \mathbb{N}}$ be the eigensystem of the following Sturm-Liouville problem:
\begin{equation*}
L \bar{g}_k (x) = \bar{\lambda}_k \bar{g}_k (x), \quad x \in (a_2,a_3),
\end{equation*} 
where the functions $\bar{g}_k(x)$ are bounded at the endpoints $a_2$ and $a_3$. Furthermore, let $g_n$ denote the $n$-th eigenfunction of $\tilde{L}_S$ obtained from the procedure in Section \ref{subsec:asymptotics-of-g} and not normalized yet.

Then, $\chi_{[a_2,a_3]} g_n \in L^2([a_2,a_3])$ can be expanded in the orthonormal basis $\{ \bar{g}_k \}$ of $L^2([a_2,a_3])$:
\begin{equation*}
\chi_{[a_2,a_3]}(x) g_n(x) = \sum_{k \in \mathbb{N}} \langle g_n,\bar{g}_k \rangle \bar{g}_k(x)
\end{equation*}
Let $c_{n,k} = \langle g_n, \bar{g}_k \rangle$.
Then,
\begin{align*}
c_{n,k} =& \frac{1}{\lambda_n} \int_{a_2}^{a_3} (L g_n)(x) \bar{g}_k(x) dx \\
=& \frac{1}{\lambda_n} \int_{a_2}^{a_3} (P g_n')'(x) \bar{g}_k(x) dx + \frac{1}{\lambda_n} \int_{a_2}^{a_3} 2 (x-\sigma)^2 g_n(x) \bar{g}_k(x) dx \\
=& - \frac{1}{\lambda_n} \int_{a_2}^{a_3} (P (x) g_n'(x)) \bar{g}_k'(x) dx + \frac{1}{\lambda_n} \lim_{\e \to 0^+} P g_n' \bar{g}_k \Big|_{a_2+\e}^{a_3} \\
&+ \frac{1}{\lambda_n} \int_{a_2}^{a_3} g_n(x) 2 (x-\sigma)^2 \bar{g}_k(x) dx \\
=& \frac{1}{\lambda_n} \lim_{\e \to 0^+} P (g_n' \bar{g}_k - g_n \bar{g}_k') \Big|_{a_2+\e}^{a_3} + \frac{1}{\lambda_n} \int_{a_2}^{a_3} (P(x) \bar{g}_k'(x))' g_n(x)  dx \\
&+ \frac{1}{\lambda_n} \int_{a_2}^{a_3} g_n(x) 2 (x-\sigma)^2 \bar{g}_k(x) dx. 
\end{align*}
This implies
\begin{align*}
c_{n,k} =& \frac{1}{\lambda_n} \lim_{\e \to 0^+} P(x) [g_n'(x) \bar{g}_k(x) - g_n(x) \bar{g}_k'(x)] \Big|_{a_2+\e}^{a_3} +  \frac{\bar{\lambda}_k}{\lambda_n} c_{n,k}.
\end{align*}
The functions $\bar{g}_k(x)$ are bounded at the endpoints $a_2$ and $a_3$, whereas $g_n(x)$ is bounded at $a_3$ but has a logarithmic singularity at $a_2$. Hence, the above simplifies to
\begin{align*}
c_{n,k} &= - \frac{1}{\lambda_n} \lim_{\e \to 0^+} P(a_2+\e) g_n'(a_2+\e) \bar{g}_k(a_2+\e) +  \frac{\bar{\lambda}_k}{\lambda_n} c_{n,k}, \\
c_{n,k} \big( 1-\frac{\bar{\lambda}_k}{\lambda_n} \big) &= - \frac{1}{\lambda_n} (a_2-a_1) (a_2-a_3) (a_2-a_4) \phi_{2,n} (a_2) \bar{g}_k(a_2), \\
c_{n,k} &= C \phi_{2,n}(a_2) \bar{g}_k(a_2) \frac{1}{\lambda_n - \bar{\lambda}_k}.
\end{align*}
Here $C$ is constant and, close to $a_2$, $g_n$ is of the form 
\begin{equation*}
g_n(x) = \phi_{1,n}(x) + \phi_{2,n}(x) \ln|x-a_2| 
\end{equation*}
for analytic functions $\phi_{i,n}$. Furthermore, $g_n(x)$ satisfies the transmission conditions \eqref{tc1}, \eqref{tc2} at $a_2$ and thus 
\begin{equation*}
\phi_{2,n}(a_2) = \frac{2}{\pi} c_2 = \mo(\sqrt{n}),
\end{equation*}
where we have used $c_2 = \mo(\e^{-1/2})$ as in \eqref{c3-prime}.

One can also find that $\bar{\lambda}_k = \mo(k^2)$ and $\bar{g}_k(a_2) = \mo(\sqrt{k})$, similarly to (5.14) and (6.2) in \cite{k-t}. Note that $\lambda_n \to +\infty$, while $\bar{\lambda}_k \to -\infty$. The norm of $\chi_{[a_2,a_3]} g_n$ can then be found to be 
\begin{align}
\| g_n\|^2_{L^2([a_2,a_3])} &= \sum_{k} c_{n,k}^2 = C^2 \phi_{2,n}^2(a_2) \sum_k \frac{\bar{g}_k^2(a_2)}{(\lambda_n- \bar{\lambda}_k)^2} \nonumber \\
&= \mo(n) \cdot \mo(n^{-2}) = \mo(n^{-1}), \nonumber \\
\| g_n\|_{L^2([a_2,a_3])} &= \mo(n^{-1/2}). \label{norm-g-a2a3}
\end{align}
Putting together \eqref{norm-g-wkb-a1a2}, \eqref{norm-g-a1-final}, \eqref{norm-g-a2} and \eqref{norm-g-a2a3}, we finally obtain
\begin{equation*}
\| g\|_{L^2([a_1,a_3])} = \sqrt{\frac{K_-}{2}} (1+ \mo(\e^{1/2-\delta})).
\end{equation*}
\end{proof}

\bibliographystyle{plain}
\bibliography{Asymptotic-Analysis-THT}

\end{document}